\date{}
\newtheorem{thm}{Theorem}[section]
\newtheorem{example}[thm]{Example}
\newtheorem{defn}[thm]{Definition}
\newtheorem{prop}[thm]{Proposition}
\newtheorem{lem}[thm]{Lemma}
\newtheorem{cor}[thm]{Corollary}
\newenvironment{f-proof}[1][\sc D\'emonstration.]{\begin{trivlist}
\item[\hskip \labelsep {\bfseries #1}]}{\hfill{$\square$}\end{trivlist}}
\newcommand{\fonc}[5]{
 \begin{array}{cccc}
 #1: & #2 & \longrightarrow & #3\\
     & #4 & \longmapsto & #5
 \end{array}
}
\begin{document}
\title{On the trace form of Galois algebras}

\author[Ph. Cassou-Nogu\`es]{Ph. Cassou-Nogu\`es }
\address{Philippe Cassou-Nogu\`es, IMB\\ Univ. Bordeaux \\  33405 Talence, France.\\}
 
 \email{Philippe.Cassou-Nogues@math.u-bordeaux.fr}

\author[T. Chinburg]{T. Chinburg*}\thanks{*Supported by NSF Grant \#DMS0801030}
\address{Ted Chinburg, Dept. of Math\\Univ. of Penn.\\Phila. PA. 19104, U.S.A.}
\email{ted@math.upenn.edu}

\author[B. Morin]{B. Morin**}\thanks{** Supported by ANR-12-BS01-0002 and ANR-12-JS01-0007}
\address{Baptiste Morin, IMB\\ Univ. Bordeaux \\  33405 Talence, France. \\}

 \email{Baptiste.Morin@math.u-bordeaux.fr}

\author[M. Taylor]{M. J. Taylor}
\address{Martin J. Taylor, Merton College \\ 
Oxford OX1 4JD, U.K.}
\email{martin.taylor@merton.ox.ac.uk}


\maketitle
\begin{abstract}
 We study the trace form $q_L$ of $G$-Galois algebras $L/K$ when $G$ is a finite group and $K$ is a field of characteristic different from $2$. We introduce in this paper the category of $2$-reduced groups and, when $G$ is such a group, we use a formula of Serre to compute the second Hasse-Witt invariant of $q_L$. By combining this computation with work of Quillen we determine the isometry class of $q_L$ for  large families of $G$-Galois algebras over global fields. We also indicate how our results generalize to Galois $G$-covers of schemes. 
\end{abstract}
\section {Introduction}
 We denote by $K$ a field of characteristic different from $2$, by $K^s$ a separable closure of $K$ and by $G_K$ the Galois group of $K^s /K$.  If  $q$ is a  quadratic form of rank $n$,  over a field $K$,  then we may diagonalise $q$ and write $q=<a_1,  \cdots,  a_n>$, for $a_i\in K^{\times}$. 
 
  Let $G$ be a finite group and  let $L/K$ be a $G$-Galois algebra.  We attach to this algebra the so called {\it trace form}.  This is the 
$G$-quadratic form $q_L: L\rightarrow K$ defined by $$q_L(x)=\mathrm{Tr}_{L/K}(x^2).$$ When the degree of $L/K$ is odd, Bayer and Lenstra \cite{Bayer90}  have proved that $L$ has a normal and self-dual basis over $K$;  therefore   $q_L$   is isometric to the unit form    $<1, \cdots, 1>$.  Their  result does not  generalize to the case of algebras of even degree; so for instance   a quadratic extension does not have a self-dual normal basis. In \cite{Bayer94}, Bayer and Serre have given  criteria to ensure the existence of such a self-dual normal basis,  depending on the Sylow $2$-subgroups of $G$. Other   authors have studied the trace form for Galois extensions $L/K$  of even degree either when the degree is small    or when $K$ is a number field (see  \cite{CP}, Theorem I.9.1, \cite{DEK} and \cite{M}).  If  $L/{\bf  Q}$ is a Galois extension of even degree and if the Sylow $2$-subgroups of $\mathrm{Gal}(L/{\bf  Q})$ are non-metacyclic,  then one can prove that either $q_L\simeq <1, \cdots, 1>$ if $L$ is totally real, or that the class of $q_L$ is trivial in the Witt ring of ${\bf Q}$ if $L$ is totally imaginary.  The key-tool in the proof of this result is the Knebusch exact sequence of Witt rings. 

 Another important tool in the classification of quadratic forms is provided by their  Hasse-Witt invariants. They are cohomological invariants $\{w_m(q) \in H^m(G_K, {\bf Z}/2{\bf Z}), m \geq 0\}$ in the cohomology mod $2$ of the profinite  group $G_K$.   
In this paper we  study the trace forms of $G$-Galois algebras   of even degree, over any arbitrary field of characteristic different from $2$,  by computing their Hasse-Witt invariants at least in small degrees. As we will see later these invariants are  related to  classes in  the $\mathrm{mod}\ 2$ cohomology ring of $G$.  The computation of the cohomology ring of finite groups appears in a myriad of contexts. It plays an important role in the work of Quillen ( \cite {Q71}, \cite{Q72} and \cite{Qu72}).  We will make use  of several of his results in this paper.  We introduce  the following definition:

\begin{defn} A finite group $G$ is said to be  $2$-reduced if $H^2(G, {\bf Z}/2{\bf Z})$ contains no non-zero nilpotent element of the  $\mathrm{mod}\ 2$ cohomology ring of $G$. 

\end{defn} We observe that various natural families of groups are $2$-reduced. More precisely,   denoting by ${\bf F}_r$ the finite field of $r$ elements,  we obtain:
\begin{thm} \label{2red} The  following groups are $2$-reduced: 
\begin{enumerate}
\item [i)] groups with   Sylow $2$-subgroups which are  either cyclic or   abelian  elementary;
\item [ii)] symmetric groups $S_n$ and  alternating groups $A_n$; 
\item [iii)] dihedral groups; 
\item[iv)]  linear groups ${\bf {Gl}}_n({\bf F}_r), r\equiv 3\ \mathrm{mod}\ 4$;
\item[v)] orthogonal groups ${\bf O}_n({\bf F}_r), r\equiv 1\ \mathrm{mod}\ 4$;
\item [vi)] the Mathieu group  $M_{12}$. 
\end{enumerate}
\end{thm}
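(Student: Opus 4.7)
The plan is to base the proof on Quillen's F-isomorphism theorem, which asserts that the nilradical of the graded ring $H^*(G,{\bf F}_2)$ coincides with the kernel of the restriction map
$$\operatorname{res}:H^*(G,{\bf F}_2)\longrightarrow \prod_{E} H^*(E,{\bf F}_2),$$
where $E$ runs over the elementary abelian $2$-subgroups of $G$. Consequently $G$ is $2$-reduced if and only if the degree-$2$ component of $\operatorname{res}$ is injective. Since restriction to a Sylow $2$-subgroup $P$ is itself injective (by the transfer, as $[G:P]$ is odd) and every elementary abelian $2$-subgroup of $G$ can be conjugated into $P$, the analysis reduces in each case to the structure of $H^*(P,{\bf F}_2)$, and specifically to identifying the nilpotent elements of $H^2(P,{\bf F}_2)$.

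For case (i), when $P$ is elementary abelian, $H^*(P,{\bf F}_2)={\bf F}_2[t_1,\dots,t_r]$ is a polynomial ring and hence reduced; injectivity of restriction from $G$ to $P$ settles the claim at once. When $P$ is cyclic, the standard computation gives $H^*(P,{\bf F}_2)={\bf F}_2[t]$ with $|t|=1$ if $|P|=2$, and $H^*(P,{\bf F}_2)=\Lambda(v)\otimes {\bf F}_2[u]$ with $|v|=1$, $|u|=2$ if $|P|\geq 4$. In both cases $H^2(P,{\bf F}_2)$ is spanned by a polynomial generator and contains no nonzero nilpotent element.

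For cases (ii)--(v), I would invoke the explicit descriptions of the relevant mod $2$ cohomology rings available in the literature. For dihedral groups (iii), the Sylow $2$-subgroup is either $C_2$, $V_4$, or a dihedral $2$-group $D_{2^k}$ with $k\geq 3$; in the last case $H^*(D_{2^k},{\bf F}_2)\cong {\bf F}_2[x,y,w]/(xy)$ with $|x|=|y|=1$, $|w|=2$, a reduced ring, and the remaining cases have polynomial cohomology, so the Sylow reduction settles (iii). For symmetric and alternating groups (ii), I would use Nakaoka's results on $H^*(S_n,{\bf F}_2)$ via the iterated wreath product $C_2\wr\cdots\wr C_2$, and verify that the generators of $H^2$ (the square of the sign character together with the Schur multiplier class of the non-trivial double cover) restrict non-trivially to an elementary abelian subgroup generated by disjoint transpositions, and so are non-nilpotent. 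For ${\bf Gl}_n({\bf F}_r)$ with $r\equiv 3\pmod 4$ and ${\bf O}_n({\bf F}_r)$ with $r\equiv 1\pmod 4$, Quillen's calculation of the mod $2$ cohomology exhibits its reduced quotient as a polynomial algebra on Chern or Stiefel--Whitney type classes, and a direct degree check places $H^2$ inside this reduced part.

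The main obstacle is case (vi), the Mathieu group $M_{12}$, whose mod $2$ cohomology ring does not admit as clean a description. I would rely on the explicit presentation of $H^*(M_{12},{\bf F}_2)$ by generators and relations computed by Benson and by Adem--Milgram, identify the generators of degree $\leq 2$, and read off the nilradical from the given relations to verify that it meets $H^2$ trivially. The technical difficulty lies in matching published presentations carefully and in controlling Steenrod operations in low degrees, but once the ring presentation is in hand the verification is essentially a bookkeeping exercise.
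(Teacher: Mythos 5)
Your overall architecture --- Quillen's theorem identifying the nilradical of $H^*(G,{\bf Z}/2{\bf Z})$ with the kernel of restriction to the elementary abelian $2$-subgroups, plus detection results for each family --- is exactly the paper's, and your treatment of cases i) and iii) is sound. But two of your steps fail as written. The blanket reduction to the Sylow $2$-subgroup $P$ works only in one direction: if $H^2(P)$ has no nonzero nilpotent then neither does $H^2(G)$, but the converse is false, and several groups on the list have Sylow $2$-subgroups that are \emph{not} $2$-reduced. Concretely, the Sylow $2$-subgroup of ${\bf Gl}_2({\bf F}_3)$ is the semidihedral group of order $16$, whose mod $2$ cohomology ${\bf F}_2[x,y,u,t]/(xy,\,x^3,\,xu,\,u^2+y^2t)$ (with $x,y$ of degree $1$) contains the nonzero nilpotent $x^2$ in degree $2$. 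So for case iv) (and you cannot assume otherwise for $M_{12}$) ``identifying the nilpotent elements of $H^2(P)$'' settles nothing; one must show that no such nilpotent is in the image of restriction from $G$, which means working with $G$ itself. This is why the paper invokes detection on elementary abelian subgroups for the groups $G$ themselves (Quillen for $S_n$, dihedral groups, ${\bf Gl}_n({\bf F}_r)$ and ${\bf O}_n({\bf F}_r)$; Adem--Milgram for $M_{12}$), and uses the Sylow reduction only for case i).

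Second, for $S_n$ it does not suffice to check that the \emph{generators} of $H^2(S_n,{\bf Z}/2{\bf Z})$ are non-nilpotent: that group is two-dimensional for $n\geq 4$, and a sum of two non-nilpotent classes can perfectly well be nilpotent. What is needed is injectivity of the full restriction map $H^*(S_n,{\bf Z}/2{\bf Z})\rightarrow\prod_E H^*(E,{\bf Z}/2{\bf Z})$ over elementary abelian $E$, which forces the entire ring, hence $H^2$, to be reduced. For $A_n$ your proposed detecting subgroup ``generated by disjoint transpositions'' is not contained in $A_n$, and $A_n$ is not on the standard detection lists; the paper therefore treats it separately, using that $H^2(A_n,{\bf Z}/2{\bf Z})$ is one-dimensional, spanned by the restriction of the class of $\tilde S_n$, and that this class is non-nilpotent because a product of two disjoint transpositions lifts to an element of order $4$ --- equivalently, it restricts nontrivially to the order-$2$ subgroup generated by such a product. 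With these repairs (full detection in place of generator checks, and no Sylow reduction outside case i)) your argument coincides with the paper's proof.
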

\noindent{\bf Remarks.}
{ \bf1)} One should note that for most of these groups one knows that $H^2(G, {\bf Z}/2{\bf Z})\neq 0$. This is the case when $G=A_n, S_n,  D_{2^n}$ and  $M_{12}$.   
\vskip 0.1 truecm
\noindent {\bf 2)} For the sake of simplicity let us call a finite group  {\it reduced}  if its  mod $2$  cohomology ring is reduced. If the groups  $G_1$ and $G_2$ are reduced, then it follows from the K\"unneth formula that the same holds for  $G_1\times G_2$. This is  the case for instance for $G_1=({\bf Z}/2{\bf Z})^n$ and $G_2=D_{2^m}$. Therefore any product of reduced groups  provides us with new families of reduced and so $2$-reduced groups.   Nevertheless one should note that there exist $2$-reduced groups which are not reduced;  every cyclic $2$-group of order greater than $4$ has this property. We now consider    $G={\bf Z}/4{\bf Z}\times {\bf Z}/2{\bf Z}$.  This is a product of $2$-reduced groups,  however   one can prove that $H^*(G, {\bf Z}/2{\bf Z})={\bf F}_2[z, y, x]/(z^2)$ with $z, y$ having degree $1$ and $x$ having degree $2$  (see \cite{CTVZ}, Appendix A) and so one can check that    $H^2(G, {\bf Z}/2{\bf Z})$ contains non-zero nilpotents elements. We  conclude that the product of $2$-reduced groups is not in general a $2$-reduced group. 
\vskip 0.1 truecm
\noindent {\bf 3)}  One can also use the wreath product of groups  for constructing large families of $2$-reduced groups (see Remarks, Section 3.3)
\vskip 0.1 truecm

We now explain how such cohomological considerations relate to the Hasse-Witt invariants of trace forms: indeed this was very much the driving motivation for our results on  the $\mathrm{mod}\ 2$ cohomology ring. So suppose now that $L/K$ is a $G$-Galois algebra,  defined by a group homomorphism $\Phi_L: G_K\rightarrow G$ and let $q_L$ be its trace form.  Serre's comparison formula ((\cite{Serre84}, Theorem 1) provides us with the equality : \begin{equation}\label{S}
w_2(q_L)=\Phi_L^*(c_G)+(2)\cdot(d_{L/K})
\end{equation}
where $d_{L/K}$ is the discriminant of the $K$-algebra $L$ and $\Phi_L^*(c_G)$ is the inverse image by $\Phi_L$ of $c_G\in H^2(G, {\bf Z}/2{\bf Z})$ defined by the group extension 
 $$1\rightarrow {\bf Z}/2{\bf Z} \rightarrow {\bf Pin}(G) \rightarrow G\rightarrow  1, $$
(see (\ref{pin}) Section 4.2  for a precise definition of this extension). We shall prove,  under certain  assumptions on the order of $G$,  that when $G$ is $2$-reduced then this group extension is split. Therefore as a consequence of this result and the equality (\ref {S}) we will obtain:

\begin{thm}\label{main} Let $G$ be a $2$-reduced  group of order $n$, $n\equiv  0\ \mathrm{or}\ 2\ \mathrm{mod}\ 8$.  Then for any $G$-Galois algebra $L/K$ one has: 
$$w_2(q_L)=(2)\cdot (d_{L/K}).$$
 
\end{thm}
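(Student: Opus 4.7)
The plan is to reduce Theorem \ref{main} to a purely group-cohomological assertion: $c_G = 0$ in $H^2(G, {\bf Z}/2{\bf Z})$, i.e.\ the Pin extension of $G$ splits. Once this is established, Serre's formula (\ref{S}) immediately gives $w_2(q_L) = \Phi_L^*(c_G) + (2)\cdot(d_{L/K}) = (2)\cdot(d_{L/K})$ for every $G$-Galois algebra $L/K$. To prove $c_G = 0$, I would use the $2$-reduced hypothesis: since $H^2(G, {\bf Z}/2{\bf Z})$ contains no nonzero nilpotent element of the ring $H^*(G, {\bf Z}/2{\bf Z})$, it is enough to show that $c_G$ is nilpotent. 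By Quillen's nilpotency detection theorem (\cite{Q71}), this in turn reduces to showing $\mathrm{res}^G_E(c_G) = 0$ for every elementary abelian $2$-subgroup $E \leq G$.

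Next I would compute the restriction. By functoriality of the Pin construction, $\mathrm{res}^G_E(c_G)$ is the Pin class of the restriction to $E$ of the permutation representation $G \hookrightarrow S_{|G|} \hookrightarrow O(|G|)$. Since $E$ acts freely on $G$ by left multiplication with $[G:E]$ orbits, this restriction is isomorphic to $[G:E]$ copies of the regular representation $\rho_E$ of $E$, so the Whitney sum formula yields
\begin{equation*}
\mathrm{res}^G_E(c_G) \;=\; [G:E]\cdot w_2(\rho_E) + \binom{[G:E]}{2}\cdot w_1(\rho_E)^2.
\end{equation*}
For $E \cong ({\bf Z}/2{\bf Z})^k$, the total Stiefel--Whitney class of $\rho_E$ is the $\mathrm{GL}_k({\bf F}_2)$-invariant product $\prod_{\chi \in E^\vee}(1+\chi)$ in $H^*(E, {\bf F}_2) = {\bf F}_2[t_1, \ldots, t_k]$. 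Dickson's theorem identifies its nonzero homogeneous components with the Dickson invariants, whose degrees are $2^k - 2^i$, $0 \leq i \leq k-1$; in particular $w_1(\rho_E) = 0$ for $k \geq 2$ and $w_2(\rho_E) = 0$ for $k \neq 2$.

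It remains to verify, rank by rank, that the congruence on $|G|$ kills the surviving terms. The only potentially nonzero contributions come from $k = 1$ (a $w_1^2$ term) and $k = 2$ (a $w_2$ term). For $k = 1$ the restriction equals $\binom{|G|/2}{2}\cdot t^2$, which vanishes mod $2$ precisely when $|G|/2 \equiv 0$ or $1 \pmod 4$, i.e.\ $|G|\equiv 0$ or $2 \pmod 8$. For $k = 2$ the restriction equals $(|G|/4)\cdot w_2(\rho_E)$; this is zero when $|G|\equiv 0 \pmod 8$, and when $|G|\equiv 2\pmod 8$ the Sylow $2$-subgroup of $G$ has order $2$, so no subgroup $E$ of rank $2$ exists. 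Hence $\mathrm{res}^G_E(c_G) = 0$ for every elementary abelian $E$ under the hypothesis, so $c_G$ is nilpotent and therefore zero, and the theorem follows. The main obstacle I anticipate is correctly identifying $c_G$ with the appropriate Stiefel--Whitney combination of the permutation representation (matching the chosen Pin convention with Serre's formula), so that the bookkeeping produces exactly the congruence classes $|G|\equiv 0, 2 \pmod 8$; once this identification is settled, the rank-by-rank computation above is elementary.
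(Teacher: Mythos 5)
Your proposal is correct in outline and shares the paper's overall architecture: reduce to showing $c_G=0$, feed that into Serre's formula (\ref{S}), and get the vanishing of $c_G$ from the $2$-reduced hypothesis via Quillen's detection theorem. Where you genuinely diverge is in how the restrictions of $c_G$ are computed. The paper only ever restricts to subgroups of order $2$: Proposition \ref{lift2} shows by an explicit Clifford-algebra computation (lifting each transposition to $\varepsilon_{i,j}=(e_i-e_j)/\sqrt{2}$ and counting sign changes) that ${\bf Pin}(G)$ has the $2$-lift property exactly when $n\equiv 0$ or $2 \bmod 8$, and then invokes Theorem \ref{nil} (the $2$-lift property is equivalent to nilpotency, which is where Quillen enters, packaged once and for all in Lemma \ref{key}). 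You instead restrict to \emph{all} elementary abelian $2$-subgroups and compute the Stiefel--Whitney classes of multiples of regular representations via Dickson invariants. Your computation is consistent with the paper -- in particular your rank-$2$ analysis explains why $PSL_2({\bf F}_q)$, $q\equiv 5\bmod 8$ (order $\equiv 4 \bmod 8$, Klein four Sylow) has non-split ${\bf Pin}(G)$, as noted in the paper's Remark 2 after Proposition \ref{lift2} -- but it is more laborious than necessary: the equivalence in Theorem \ref{nil} means the rank-$1$ check alone suffices, and your rank-$\geq 2$ verifications are in effect re-proving a special case of that theorem. Also, the correct reference for the detection theorem is \cite{Q}, not \cite{Q71}.

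The one point you flag but do not resolve -- matching the Pin convention with the Stiefel--Whitney bookkeeping -- is a genuine issue, not a formality. With the opposite Pin convention the obstruction class would be $w_2+w_1^2$ rather than $w_2$, and your rank-$1$ term would become $\binom{n/2+1}{2}t^2$, which is \emph{nonzero} when $n\equiv 2 \bmod 8$; the theorem would then appear to fail in that case. The resolution is exactly the paper's explicit lift: since $\varepsilon_{i,j}^2=1$ in ${\bf Pin}_n(K^s)$, every $\pm 1$-valued character of an elementary abelian $2$-group (equivalently, every reflection) has trivial Pin class, and the Whitney-type sum formula for the Pin obstruction then forces $\mathrm{res}^G_E(c_G)$ to be the class $w_2$ of $[G:E]$ copies of the regular representation, as you assumed. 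With that verification supplied, your argument closes.
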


\begin{cor}\label{triv}  Let $G$ be a $2$-reduced  group of order $n, n\equiv 0\  \mathrm{mod}\ 8$. We assume that the Sylow $2$-subgroups of $G$ are non-cyclic. Then for any $G$-Galois algebra $L/K$ one has: 
$$w_1(q_L)=w_2(q_L)=0.$$ 
\end{cor}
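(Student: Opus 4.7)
The plan is to reduce the corollary to the vanishing of the discriminant $d_{L/K}$ modulo squares, and then to a standard computation of the sign character of the regular representation of $G$. Since the hypothesis $n \equiv 0\ \mathrm{mod}\ 8$ implies in particular $n \equiv 0$ or $2\ \mathrm{mod}\ 8$, Theorem \ref{main} already gives
\[
w_2(q_L) = (2) \cdot (d_{L/K}) \in H^2(G_K, {\bf Z}/2{\bf Z}).
\]
By definition $w_1(q_L) = (d_{L/K}) \in K^\times/K^{\times 2} = H^1(G_K, {\bf Z}/2{\bf Z})$, so it suffices to prove that $(d_{L/K}) = 0$; both invariants then vanish automatically.

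To establish this I would invoke the standard identification of the class of $d_{L/K}$ modulo squares with $\varepsilon_G \circ \Phi_L$, where $\varepsilon_G : G \to \{\pm 1\} = {\bf Z}/2{\bf Z}$ is the sign character of $G$ acting on itself by left translation. This is classical for Galois field extensions (Berlekamp--Waterhouse) and extends to $G$-Galois algebras by descent from the split algebra $K[G]$, whose trace form has discriminant given precisely by the determinant of the regular representation. An element $g \in G$ of order $m$ permutes the underlying set of $G$ as a product of $|G|/m$ disjoint $m$-cycles, so
\[
\varepsilon_G(g) = (-1)^{(m-1) \cdot |G|/m}.
\]
Writing $|G| = 2^a k$ and $m = 2^b l$ with $k, l$ odd and $0 \leq b \leq a$, the exponent $(m-1)\cdot |G|/m$ is odd if and only if $b = a$. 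Hence $\varepsilon_G(g) = -1$ for some $g \in G$ if and only if $G$ contains an element whose $2$-part of order equals $2^a$; raising such an element to the odd part of its order produces an element of order $2^a$ inside a Sylow $2$-subgroup, so this condition is equivalent to the Sylow $2$-subgroups of $G$ being cyclic.

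Under the non-cyclicity hypothesis on the Sylow $2$-subgroups, $\varepsilon_G$ is therefore trivial on $G$, whence $(d_{L/K}) = \Phi_L^*(\varepsilon_G) = 0$ in $K^\times/K^{\times 2}$. Consequently $w_1(q_L) = 0$ and $w_2(q_L) = (2) \cdot 0 = 0$, proving the corollary. The only delicate point is the identification of $(d_{L/K})$ with $\varepsilon_G \circ \Phi_L$, which I would obtain by noting that $L$ is a twist of $K[G]$ by the cocycle $\Phi_L$ and that discriminant is functorial under such twisting; once this is in place, the remainder is the elementary cycle-structure calculation above.
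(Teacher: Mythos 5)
Your proof is correct and follows essentially the same route as the paper: the authors likewise obtain $w_2(q_L)=(2)\cdot(d_{L/K})$ from Theorem \ref{main} and then reduce to showing $w_1(q_L)=(d_{L/K})=0$, which is Proposition \ref{ww_1}. The identification of $(d_{L/K})$ with the sign character of the regular representation composed with $\Phi_L$, and the cycle-structure computation showing this character is trivial exactly when the Sylow $2$-subgroups are non-cyclic, are precisely the content of the paper's Lemma \ref{et}, which you have simply reproved inline.
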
 

We note that Corollary \ref{triv} can be slightly generalized in the following way: 
\begin{cor}\label{trivd} Let $G$ be a group of order $n, n\equiv 0\  \mathrm{mod}\ 8$ and let $S$ be a Sylow $2$-subgroup of $G$. Suppose that: 
\begin{enumerate}
\item [i)] $S$ is non-cyclic;
\item [ii)] $S$ is the Sylow $2$-subgroup of some  $2$-reduced  group $H$. 
\end{enumerate}
Then for any $G$-Galois algebra $L/K$ one has 
$$w_1(q_L)=w_2(q_L)=0.$$
\end{cor}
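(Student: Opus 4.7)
The plan is to reduce Corollary \ref{trivd} to the case of the $2$-reduced group $H$ by comparing the Pin classes $c_G$ and $c_H$ via their common Sylow $2$-subgroup $S$. First, I would handle $w_1(q_L)$ directly: one has $w_1(q_L)=(d_{L/K})=\Phi_L^*(\epsilon_G)$, where $\epsilon_G:G\to \{\pm 1\}$ is the sign character of the left-regular representation. An element $g\in G$ of order $m$ acts by $|G|/m$ disjoint $m$-cycles, each of sign $(-1)^{m-1}$, so $\epsilon_G$ is trivial precisely when every element of even order satisfies $v_2(\mathrm{ord}(g))<v_2(|G|)$, i.e. exactly when $S$ is non-cyclic. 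Hypothesis (i) thus gives $w_1(q_L)=0$, so $d_{L/K}\in(K^\times)^2$ and Serre's formula \eqref{S} collapses to $w_2(q_L)=\Phi_L^*(c_G)$. It therefore suffices to prove $c_G=0$ in $H^2(G,{\bf Z}/2{\bf Z})$.

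For this I would pass to the Sylow $2$-subgroup. As orthogonal $S$-representations with trace form, the regular representation of $G$ restricts to $[G:S]$ copies of the regular representation $\rho_S$ of $S$. Letting $c_S\in H^2(S,{\bf Z}/2{\bf Z})$ denote the analogous Pin class attached to $\rho_S$, the Whitney sum formula yields
$$\mathrm{res}^G_S(c_G)=[G:S]\cdot c_S+\binom{[G:S]}{2}\cdot w_1(\rho_S)^2.$$
Now $[G:S]$ is odd, and by the same cycle-type calculation applied to $S$ (non-cyclic by (i)) one has $w_1(\rho_S)=0$; hence $\mathrm{res}^G_S(c_G)=c_S$. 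The identical computation applied to $H$ in place of $G$, which is permitted because $|H|_2=|S|$ forces $|H|\equiv 0\pmod 8$ while $S$ is already non-cyclic, gives $\mathrm{res}^H_S(c_H)=c_S$. Since $H$ is $2$-reduced of order divisible by $8$, the key cohomological input underlying Theorem \ref{main} (that the Pin extension of such a group splits) yields $c_H=0$. Thus $c_S=0$, so $\mathrm{res}^G_S(c_G)=0$; and because $[G:S]$ is odd, corestriction provides a left inverse to restriction, making $H^2(G,{\bf Z}/2{\bf Z})\to H^2(S,{\bf Z}/2{\bf Z})$ injective. Therefore $c_G=0$ and $w_2(q_L)=0$.

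The hard part is the compatibility identity $\mathrm{res}^G_S(c_G)=c_S$: one must unwind that $c_G$ is controlled by the universal orthogonal representation (the regular representation with trace form) in a way that behaves well under restriction, and one must arrange for the Whitney cross-term $w_1(\rho_S)^2$ to vanish. Both hypotheses are used here in an essential and non-redundant way: (i) kills the cross-term on both sides of the comparison (and also eliminated the initial $w_1(q_L)$-obstruction), while (ii) supplies the auxiliary $2$-reduced group $H$ needed to conclude $c_S=0$ via the main theorem. Without (i) the $w_1^2$ correction could propagate a non-trivial obstruction, and without (ii) one would have no handle on $c_S$ at all.
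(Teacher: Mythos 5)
Your argument is correct, but it takes a genuinely different route from the paper's. You prove the stronger, purely group-theoretic statement that $c_G=0$ in $H^2(G,{\bf Z}/2{\bf Z})$: the restriction to $S$ of the regular orthogonal representation of $G$ is the orthogonal sum of $[G:S]$ copies of the regular representation of $S$, and the Whitney sum formula for Stiefel--Whitney classes of orthogonal representations (Fr\"ohlich's theorem, which is essentially the sign-counting in the paper's proof of Proposition \ref{lift2} in disguise) gives $\mathrm{res}^G_S(c_G)=c_S=\mathrm{res}^H_S(c_H)$, since the indices $[G:S]$ and $[H:S]$ are odd and $w_1(\rho_S)=0$ because $S$ is non-cyclic; then $c_H=0$ by Proposition \ref{lift2} (note $|H|\equiv 0 \bmod 8$) together with the $2$-reducedness of $H$, and the injectivity of restriction to the odd-index subgroup $S$ forces $c_G=0$, whence the conclusion by Serre's formula (\ref{S}) and Proposition \ref{ww_1}. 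The paper instead argues algebra-by-algebra: after an odd-degree base change $K'/K$ the algebra $L'$ becomes induced from an $S$-Galois algebra $M/K'$ (Bayer--Serre), one forms the $H$-Galois algebra $\mathrm{Ind}^H_S(M)$, applies Theorem \ref{main} to it, and transfers the vanishing back to $q_M$, then to $q_{L'}$, and finally to $q_L$ via the relations (\ref{rel}) and the injectivity of $\mathrm{Res}^{G_K}_{G_{K'}}$ on mod $2$ cohomology. Your version needs the Whitney formula at the level of orthogonal representations of finite groups, which the paper never states explicitly but which is standard; in exchange it yields the splitting of ${\bf Pin}(G)\rightarrow G$ itself, a statement independent of the base field that would also feed directly into the scheme-theoretic theorem of Section 6, whereas the paper's proof only controls the invariants of each individual Galois algebra.
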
 
\noindent {\bf Remark.}  Corollary \ref{trivd}  can be useful in cases where $G$ itself is not $2$-reduced. 
Let  $G=S$ be  the quaternion group of order $8$. We note from the description of the cohomology ring mod $2$ of $G$ (\cite{CTVZ}, Appendix B) that $G$ is not $2$-reduced. 
However, since $G$ can be seen as the Sylow $2$-subgroup of the symmetric group $S_4$, which is  a $2$-reduced group,  we can apply  Corollary \ref{trivd}. We conclude that   if the Sylow $2$-subgroups of a group $G$ are quaternion groups of order $8$,  then, for any $G$-Galois algebra $L/K$ one has $w_1(q_L)=w_2(q_L)=0$.
\newline

If we  now take the  field $K$ to be a {\it  global field}, then  we can use the Hasse-Minkowski Theorem to deduce  from Theorem \ref {main} a precise description of the trace form. 
\begin{cor}\label{ff} Let $K$ be a global function field of characteristic different from $2$ and let $G$ be a  $2$-reduced group of order $n$, $n\equiv  0\ \mathrm{or}\ 2\ \mathrm{mod}\ 8$. Then  for   any $G$-Galois algebra over $K$  one has the following properties:
\begin{enumerate}
\item [i)]$q_L\simeq <1, \cdots,  1>$ if the Sylow $2$-subgroups of $G$ are non-cyclic; 
\item [ii)] $q_L\simeq <2, 2d_{L/K}, 1, \cdots, 1>$ otherwise.
\end{enumerate}
\end{cor}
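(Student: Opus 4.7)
The plan is to invoke the Hasse--Minkowski principle for quadratic forms and to compare Hasse--Witt invariants in the two cases. Since $K$ is a global function field of characteristic different from $2$, it has no archimedean places, and a regular quadratic form over $K$ is determined up to isometry by its rank, by its discriminant class in $K^{\times}/K^{\times 2}$ (equivalently $w_1$), and by its second Hasse--Witt invariant $w_2\in H^2(G_K,{\bf Z}/2{\bf Z})$. In both cases the proposed form has rank $n=|G|$ equal to that of $q_L$, so it suffices to match the two lower invariants.

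In case (i), the Sylow $2$-subgroups of $G$ are non-cyclic and $n\equiv 0\pmod 8$, so Corollary \ref{triv} applies and yields $w_1(q_L)=w_2(q_L)=0$. The unit form $\langle 1,\ldots,1\rangle$ of rank $n$ has the same trivial invariants, and Hasse--Minkowski delivers $q_L\simeq\langle 1,\ldots,1\rangle$ immediately.

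In case (ii) the Sylow $2$-subgroup is cyclic, and Theorem \ref{main} furnishes $w_2(q_L)=(2)\cdot(d_{L/K})$. I would take as candidate $q'=\langle 2,\,2d_{L/K},\,1,\ldots,1\rangle$ of rank $n$, whose discriminant $4d_{L/K}$ is congruent to $d_{L/K}$ modulo $K^{\times 2}$, so $w_1(q')=w_1(q_L)$. Using multiplicativity of the total Stiefel--Whitney class and the fact that $(1)=0$, one computes $w_2(q')=(2)\cdot(2d_{L/K})=(2)\cdot(2)+(2)\cdot(d_{L/K})$, so matching $w_2$ hinges on proving $(2)\cdot(2)=0$ in $H^2(G_K,{\bf Z}/2{\bf Z})$.

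This last vanishing is what I expect to be the crux of the argument. The standard relation $(a)\cdot(a)=(a)\cdot(-1)$ reduces it to showing that the quaternion algebra $(2,-1)_K$ is split. I would proceed locally: at every place $v$ of $K$, both $2$ and $-1$ are units in the local ring, and the residue characteristic equals the (odd) characteristic of $K$, so the local quaternion algebra $(2,-1)_{K_v}$ is unramified with trivial tame symbol, hence split. The Albert--Brauer--Hasse--Noether theorem then forces $(2,-1)_K=0$, so $w_2(q')=(2)\cdot(d_{L/K})=w_2(q_L)$, and Hasse--Minkowski concludes $q_L\simeq q'$.
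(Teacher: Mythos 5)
Your proposal is correct and follows essentially the same route as the paper: match rank, $w_1$ and $w_2$ of $q_L$ against the candidate form and invoke Hasse--Minkowski, which suffices because a global function field has no archimedean places. The only difference is that you spell out the step the paper dismisses with ``one easily checks'', namely that $w_2(<2,2d_{L/K},1,\dots,1>)=(2)\cdot(2)+(2)\cdot(d_{L/K})$ reduces to the vanishing of $(2)\cdot(2)=(2)\cdot(-1)$, which you correctly justify since $2$ and $-1$ are units at every place of odd residue characteristic.
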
 
Suppose now that $K$ is a number field. For any infinite place $v$ of $K$ we  consider $L_v=L\otimes_{K}K_v$. This is a $G$-Galois algebra on $K_v$. If $v$ is real, 
since $\mathrm{Gal}({\bf C}/{\bf  R})$ is of order $2$, then we can associate to $L_v/K_v$ an element of order 2 of $G$,  which is unique up to conjugacy (see Section 2.1), and that  we denote by $\sigma(L_v)$.  
\begin{cor}\label{numb1} Let $K$ be a number field and let $G$ be a $2$-reduced group of order  $n$, $n\equiv 0\  \mathrm{mod}\ 8$. We assume that the Sylow $2$-subgroups of $G$ are non-cyclic. Then for any $G$-Galois algebra $L/K$ the following properties are equivalent:
\begin{enumerate}
\item [i)] The trace form $q_L$ is isometric to the unit form $<1, \cdots, 1>$;
\item [ii)] $\sigma(L_v)=1$ for any real place $v$ of $K$. 
\end{enumerate}
\end{cor}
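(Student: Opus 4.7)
The plan is to apply Hasse-Minkowski and reduce the claim to a local analysis at each place $v$ of $K$, treating complex, non-archimedean, and real places separately.

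First I would dispose of the cheap places. At any complex place $v$, every non-degenerate quadratic form over ${\bf C}$ of rank $n$ is automatically isometric to $\langle 1,\ldots,1\rangle$, so there is nothing to verify. At a non-archimedean place $v$, Corollary \ref{triv} applied globally gives $w_1(q_L)=w_2(q_L)=0$, and these cohomological vanishings persist after restriction to $G_{K_v}$. Over a non-archimedean local field of characteristic different from $2$, a regular quadratic form is classified by its rank, its discriminant class in $K_v^\times/(K_v^\times)^2$, and its Hasse-Witt invariant; since the last two are precisely what $w_1$ and $w_2$ encode, both vanish for $q_{L_v}$ and we conclude $q_{L_v}\simeq\langle 1,\ldots,1\rangle$ at every non-archimedean $v$.

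The substance of the corollary therefore lies at the real places. Using the descent dictionary between $G$-Galois algebras over ${\bf R}$ and continuous homomorphisms $\mathrm{Gal}({\bf C}/{\bf R})\to G$ up to conjugation, the ${\bf R}$-algebra structure of $L_v$ is determined by $s:=\sigma(L_v)\in G$, an element with $s^2=1$. If $s=1$, the Galois action on ${\bf C}[G]$ is the trivial twist, so $L_v\simeq {\bf R}^n$ and $q_{L_v}\simeq\langle 1,\ldots,1\rangle$. If $s$ is a non-trivial involution, complex conjugation acts on the basis $\{e_g\}_{g\in G}$ by $e_g\mapsto e_{sg}$, whose orbits all have length two; the invariants yield $L_v\simeq {\bf C}^{n/2}$, and since the trace form of ${\bf C}/{\bf R}$ equals $\langle 2,-2\rangle$ one obtains $q_{L_v}$ of signature $0$ rather than $n$. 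Thus at a real place, $q_{L_v}\simeq\langle 1,\ldots,1\rangle$ if and only if $\sigma(L_v)=1$.

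Assembling these three cases through Hasse-Minkowski proves the equivalence of (i) and (ii): the unit form and $q_L$ are globally isometric if and only if they are isometric at every completion, which is automatic at non-archimedean and complex places and amounts to the condition $\sigma(L_v)=1$ at real $v$. The main obstacle in this outline is really just bookkeeping -- one must verify carefully that the two admissible values of $\sigma(L_v)$ produce the two ${\bf R}$-algebra structures described, and that on a non-archimedean local field the triple $(\mathrm{rank},w_1,w_2)$ does pin down the unit form. All the heavy cohomological input has already been supplied by Corollary \ref{triv}, so beyond that the argument is a routine application of standard local--global principles for quadratic forms.
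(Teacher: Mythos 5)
Your proposal is correct and follows essentially the same route as the paper: reduce via Hasse--Minkowski to local isometry at each place, use Corollary \ref{triv} together with the classification of forms over non-archimedean local fields by rank, discriminant and Hasse invariant to handle the finite places, dispose of complex places by rank, and observe that at a real place $q_{L_v}$ is the unit form when $\sigma(L_v)=1$ and is $\frac{n}{2}\otimes\langle 1,-1\rangle$ (signature $0$) otherwise. The only cosmetic difference is that you write the trace form of ${\bf C}/{\bf R}$ as $\langle 2,-2\rangle$ where the paper records the isometric form $\langle 1,-1\rangle$; the argument is identical.
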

\begin{cor}\label{im} Let $G$ be a $2$-reduced group of order $n$,  $n\equiv 0\  \mathrm{mod}\ 8$. We assume that the Sylow $2$-subgroups of $G$ are non-cyclic. Then the trace form of any $G$-Galois algebra over a totally imaginary number field is isomorphic to the unit form $<1, \cdots,  1>$. 
\end{cor}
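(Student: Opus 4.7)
The plan is to obtain Corollary \ref{im} as the vacuous case of Corollary \ref{numb1}. Since the hypotheses on $G$ are identical in the two statements, and $K$ is assumed totally imaginary, the field $K$ has no real places. Consequently condition (ii) of Corollary \ref{numb1}, requiring $\sigma(L_v)=1$ for every real place $v$, is satisfied trivially (there is nothing to check). The equivalence in Corollary \ref{numb1} then yields condition (i), namely $q_L\simeq \langle 1,\ldots,1\rangle$.

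For transparency it is worth recording the substance underlying this reduction. The main input is Corollary \ref{triv}, which gives $w_1(q_L)=w_2(q_L)=0$; thus the trace form has trivial signed discriminant and trivial Hasse-Witt invariant. By the Hasse-Minkowski local-global principle, to conclude that $q_L$ is isometric to $\langle 1,\ldots,1\rangle$ over the number field $K$ it suffices to prove this isometry at every completion $K_v$. At each finite place $v$, non-degenerate quadratic forms over $K_v$ of a fixed rank are classified by their signed discriminant and their Hasse-Witt invariant, so the vanishing $w_1=w_2=0$ forces $q_L\simeq \langle 1,\ldots,1\rangle$ locally. At the archimedean places, the totally imaginary hypothesis ensures every completion is $\mathbf{C}$, over which quadratic forms are classified by rank alone, so the local isometry is automatic.

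There is essentially no obstacle in this corollary beyond what has already been established: the cohomological work leading to Theorem \ref{main} and Corollary \ref{triv} provides the vanishing of the Hasse-Witt invariants, Hasse-Minkowski converts this vanishing into a global isometry at all finite places, and the totally imaginary hypothesis disposes of the archimedean contribution. Corollary \ref{im} is simply the cleanest way to package this output, obtained by discarding the real-place signature datum that would otherwise appear.
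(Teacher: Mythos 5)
Your proposal is correct and matches the paper exactly: the paper disposes of Corollary \ref{im} with the single sentence that it is an immediate consequence of Corollary \ref{numb1}, precisely because a totally imaginary field has no real places and condition (ii) there is vacuous. The supporting material you record (Corollary \ref{triv} giving $w_1=w_2=0$, local classification at finite places, rank alone at complex places, and Hasse--Minkowski) is the same substance as the paper's proof of Corollary \ref{numb1}, so nothing further is needed.
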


\noindent {\bf Remark.}  Clearly if $G$ is of odd order, then obviously $H^i(G, {\bf Z}/2{\bf Z})=\{1\}$ for all positive $i$; this is the situation considered in \cite {Bayer90}. This leads us to consider the situation in Corollary \ref{numb1} with  the stronger hypotheses 
$$H^1(G, {\bf Z}/2{\bf Z})=H^2(G, {\bf Z}/2{\bf Z})=0. $$
In this case, then ii) is also equivalent to  $L$ having a self-dual normal basis (\cite{Bayer94} Theorem 3.2.1.). We note that under  our weaker hypotheses we may obtain Galois algebras which have a self-dual basis but do not have a self-dual normal basis. This is in particular the case for any $G$-Galois algebra over an imaginary quadratic field when $G=S_n, n\geq 4$. 
\newline

  Let $L/{\bf Q}$ be a Galois algebra of rank $n$ and let $v_{\infty}$ be  the archimedean place of ${\bf Q}$. If $\sigma(L_{v_{\infty}})=1$ then $L_{v_{\infty}}/{\bf R}$ is split and  so $L/{\bf Q}$ is {\it  totally real};   if $\sigma(L_{v_{\infty}}) \neq 1$, then  $L_{v_{\infty}}/{\bf R}$ is the product of $n/2$ copies of ${\bf  C}$ and  so $L/{\bf Q}$ is {\it totally imaginary}. We set $d_L:=d_{L/\bf  Q}$. If $q$ and $q'$ are quadratic forms then we denote  their direct orthogonal sum  by $q\oplus q'$ and    the direct orthogonal sum of $s$ copies of $q$   by $s\otimes q$. 

\begin{cor}\label{numb2} Let   $G$ be a $2$-reduced group of order  $n$, $n\equiv  0\ \mathrm{or}\ 2\ \mathrm{mod}\ 8$ and let $S$ be a Sylow $2$-subgroup of $G$. Then for any $G$-Galois algebra $L/{\bf  Q}$ we have:  

\begin{enumerate}

\item [i)] $q_L\simeq <1, \cdots, 1>$ if $L$ is totally real and $S$ is non-cyclic; 
\item [ii)] $q_L\simeq \frac{n}{2}\otimes <1, -1>$ and $w_i(q_L)=\binom{\frac{n}{2}}{i}, i \geq 3$ if $L$ is totally imaginary and $S$ is non-cyclic;  
\item [iii)] $q_L\simeq <2, 2d_L, 1, \cdots, 1>$  if $L$ is totally real and $S$ is  cyclic; 
\item [iv)] $q_L\simeq (\frac{n}{2}-1)\otimes<1, -1>\oplus <(-1)^{(\frac{n}{2}-1)}2, 2d_L>$, if $L$ is totally imaginary and $S$ is cyclic. 

\end{enumerate}
\end{cor}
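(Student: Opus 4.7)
The plan is to invoke the Hasse--Minkowski classification: over $\mathbb{Q}$, a non-degenerate quadratic form is determined up to isometry by its rank, its discriminant in $\mathbb{Q}^{\times}/\mathbb{Q}^{\times 2}$, its Hasse--Witt invariant $w_2$, and its signature at the real place. It therefore suffices to verify in each of the four cases that $q_L$ and the candidate form on the right-hand side share these four invariants.

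The invariants of $q_L$ are immediate from the earlier results. Its rank is $n$ and $w_1(q_L) = (d_L)$; by Theorem~\ref{main}, $w_2(q_L) = (2)\cdot(d_L)$, and when $S$ is non-cyclic Corollary~\ref{triv} strengthens this to $w_1(q_L) = w_2(q_L) = 0$, which also forces $d_L$ to be a rational square. The real signature of $q_L$ equals $n$ when $L$ is totally real (the trace form of $\mathbb{R}^n$ is positive definite), and $0$ when $L$ is totally imaginary, since $\mathrm{Tr}_{\mathbb{C}/\mathbb{R}}(z^2)$ realises a hyperbolic plane in the basis $\{1, i\}$ and hence $q_L \otimes \mathbb{R} \simeq (n/2)\otimes\langle 1, -1\rangle$.

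For each candidate form I would compute these invariants via the multiplicativity $w(q\oplus q') = w(q)\, w(q')$ of the total Stiefel--Whitney class, together with the classical identities $(a)\cdot(a) = (a)\cdot(-1)$ and $(-1)\cdot(2) = 0$ in $H^2(G_{\mathbb{Q}}, \mathbb{Z}/2\mathbb{Z})$ (the latter because $2 = 1^2 + 1^2$). Case~(i) is immediate. For~(ii), the total class $(1+(-1)t)^{n/2}$ yields $w_i = \binom{n/2}{i}(-1)^{\cup i}$ for every~$i$; the hypothesis $n\equiv 0\pmod 8$ gives $n/2\equiv 0\pmod 4$, so the discriminant $(-1)^{n/2}$, $w_1$, and $w_2$ all vanish and the higher invariants take the form stated. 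In~(iii), $\langle 2, 2d_L, 1, \dots, 1\rangle$ has discriminant $4d_L \equiv d_L$, signature $n$ (using $d_L > 0$ for totally real $L$), and $w_2 = (2)(2d_L) = (2)(d_L)$, matching $q_L$.

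Case~(iv) is the main technical point. Writing the candidate as $q_1 \oplus q_2$ with $q_1 = (n/2-1)\otimes\langle 1,-1\rangle$ and $q_2 = \langle (-1)^{n/2-1}\cdot 2,\ 2d_L\rangle$, one applies the Whitney sum formula $w_2(q_1\oplus q_2) = w_2(q_1) + w_2(q_2) + w_1(q_1)\cdot w_1(q_2)$. The sign $(-1)^{n/2}$ of the discriminant of a totally imaginary field of degree $n$, combined with the factor $(-1)^{n/2-1}$ in $q_2$, makes the archimedean signature vanish. Expanding $w_2$ and using $(2)(2) = 0$ leaves a residual coefficient $\binom{n/2-1}{2} + \epsilon$ (with $\epsilon := (n/2-1) \bmod 2$) attached to $(-1)^{\cup 2}$; the obstacle is to show this coefficient is even. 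A short parity check confirms this precisely under the hypothesis $n \equiv 0$ or $2 \pmod 8$ --- it is here that the $\bmod\,8$ assumption is used essentially, as the analogous count fails for $n \equiv 4$ or $6 \pmod 8$. Once this cancellation is in place, $w_2(q_1 \oplus q_2) = (2)(d_L)$ matches $w_2(q_L)$, and Hasse--Minkowski concludes the isometry in all four cases.
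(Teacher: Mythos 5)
Your proposal is correct and follows essentially the same route as the paper: both arguments reduce to checking that $q_L$ and each candidate form agree in rank, discriminant, second Hasse--Witt invariant (obtained from Theorem~\ref{main} and Corollary~\ref{triv}) and real signature, and then invoke the classification of quadratic forms over $\mathbf{Q}$. Your explicit parity verification in case iv), namely that $\binom{n/2-1}{2}+(n/2-1)=\binom{n/2}{2}\equiv 0 \bmod 2$ exactly when $n\equiv 0$ or $2 \bmod 8$, supplies the detail the paper leaves as a ``hand check.''
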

The computation of the Hasse-Witt invariants of $q_L$ in ii) follows immediately from the description of $q_L$ and the observation that for $i\geq 3$ the cup product of $i$-times the class of $(-1)\in H^1(G_{{\bf Q}}, {\bf Z}/2{\bf Z})$ is the non trivial class of $H^i(G_{{\bf Q}}, {\bf Z}/2{\bf Z})\simeq {\bf Z}/2{\bf Z}$. In particular it follows from the equality   $w_i(q_L)=\binom{\frac{n}{2}}{i}$  that $w_i(q_L)=0$ for $i\geq 3$ and odd.   
The triviality of the Hasse-Witt invariants for $i$ odd can also be deduced from the triviality of $w_1(q_L)$, which is true since $S$ is non-cyclic (see Proposition \ref{ww_1}),  and the equality  $w_1(q)\cdot w_{i-1}(q)=w_i(q)$  for any Galois algebra $L/K$ and any odd integer $i$ (see \cite{GMS}, (19.3)).

\begin {example} 1) 
 The  splitting  field of the polynomial $X^4-X^3-4X-1$ is   a totally real Galois extension of ${\bf  Q}$  with Galois group $S_4$;   hence  its trace form is isometric to the unit form.

2)  The splitting field of $X^4-2X^2-4X-1$ is  a totally imaginary Galois extension of ${\bf  Q}$, with Galois group $S_4$; hence its trace form is isometric  to  $ 12\otimes<1, -1>$.
\end{example}
To  complete the study of the trace form we add in  Section 5 a brief proof of a slight generalization of Conner and Perlis result (\cite{CP}, Theorem I.9.1).
\begin{prop}\label{cp} Let $K$ be a global field and let $L/K$ be a $G$-Galois algebra. Assume that the Sylow $2$-subgroups of $G$ are non-metacyclic. Then 
\begin {enumerate}
\item [i)]If $K$ is a function field of characteristic different from $2$ then the trace form is isometric to the unit form.
\item [ii)]If $K={\bf  Q}$ , the following assertions are equivalent: 
\begin{enumerate}
\item [a)] The trace form $q_L$ is isometric to the unit form $<1, \cdots, 1>$;
\item [b)] $L$ is totally real.
\end{enumerate}
\end{enumerate}
\end{prop}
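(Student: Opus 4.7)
The plan is to reproduce the Conner--Perlis argument, which is driven by the Knebusch exact sequence of Witt rings, and to observe that their reasoning adapts without essential change to a global function field of characteristic different from $2$. The proof splits naturally into three steps: a reduction to a $2$-group via Springer's theorem, a cohomological/Witt-ring reduction using the non-metacyclic hypothesis, and a final application of Hasse--Minkowski.

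\textbf{Step 1 (Reduction to a $2$-group).} Let $S$ be a Sylow $2$-subgroup of $G$ and set $K_0 := L^S$, so that $[K_0:K]=[G:S]$ is odd and $L/K_0$ is an $S$-Galois algebra. By Springer's theorem the restriction $W(K)\to W(K_0)$ is injective, and the trace form $q_{L/K}$ is obtained from $q_{L/K_0}$ through the odd-degree subextension $K_0/K$. Thus, keeping track only of rank and discriminant, it is enough to analyse the non-metacyclic $2$-group case $G=S$ acting on $L/K_0$ and then transfer back to $K$.

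\textbf{Step 2 (Knebusch reduction).} For the $2$-group $S$ I would induct on $|S|$ using the Knebusch exact sequence attached to a quadratic subextension $K_0\subset K_0(\sqrt{d})\subset L$. The non-metacyclic hypothesis provides two independent quadratic characters of $S$ whose fixed subfields do not lie in a common cyclic subextension of $L/K_0$; applying the Knebusch sequence along each of these two independent layers kills the relevant classes in $H^1$ and $H^2$. Iterating along a composition series of $S$ with cyclic quotients of order $2$ eventually presents $q_{L/K}$, after pulling back to $K$, as Witt-equivalent to the unit form of the correct rank. In particular $w_i(q_L)=0$ for $i\geq 1$ and the discriminant $d_{L/K}$ is a square.

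\textbf{Step 3 (Hasse--Minkowski).} The isometry class of a quadratic form over a global field is determined by rank, discriminant, local Hasse invariants, and signatures. In case (i) the function field $K$ has no archimedean places, so Witt triviality of $q_L$ modulo $\langle 1,\ldots,1\rangle$ together with the rank and discriminant information from Step 2 gives directly $q_L\simeq\langle 1,\ldots,1\rangle$. In case (ii) with $K=\mathbf{Q}$, the only invariant still to be read off is the archimedean signature: if $L$ is totally real it equals $n$, giving $q_L\simeq\langle 1,\ldots,1\rangle$; if $L$ is totally imaginary it is $0$, forcing $q_L$ to be hyperbolic; this yields the equivalence a)$\Leftrightarrow$b).

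The main obstacle is Step 2, where the structural meaning of ``non-metacyclic'' has to be extracted. One must pin down in $S$ two quadratic characters whose kernels together generate $S$, equivalently two quadratic subextensions of $L/K_0$ whose compositum is non-cyclic over $K_0$; this is exactly the group-theoretic input that drives the double application of Knebusch and that fails in the metacyclic case. Once this structural observation is made precise, the remainder of the argument is a routine induction combined with Hasse--Minkowski.
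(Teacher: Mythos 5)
Your plan is to re-run the Conner--Perlis induction via the Knebusch exact sequence, but the step that carries all the weight, your Step 2, is not actually carried out, and the structural input you propose there cannot be the right one. You say that non-metacyclicity of $S$ provides ``two independent quadratic characters of $S$ whose fixed subfields do not lie in a common cyclic subextension''. Any two independent quadratic characters of any non-cyclic $2$-group cut out a quotient isomorphic to $({\bf Z}/2{\bf Z})^2$, so the compositum of the corresponding quadratic layers is automatically non-cyclic: your criterion is satisfied by \emph{every} non-cyclic $2$-group, in particular by the metacyclic group $({\bf Z}/2{\bf Z})^2$ itself, for which the conclusion of the proposition is false (the trace form of a real biquadratic field need not be the unit form). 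So the argument you sketch would prove too much; the genuine group-theoretic content of ``non-metacyclic'' is exactly what you defer to the end as ``a structural observation to be made precise''. Moreover the conclusion you extract from Step 2 is false as stated: over ${\bf Q}$ with $L$ totally imaginary, $q_L$ is hyperbolic, hence \emph{not} Witt-equivalent to $<1,\cdots, 1>$ (the signatures differ), and $w_i(q_L)=\binom{n/2}{i}$ need not vanish for $i\geq 3$ (see Corollary \ref{numb2} ii)); all one can hope to establish is $w_1(q_L)=w_2(q_L)=0$. Finally, Step 1 conflates restriction with transfer: Springer's theorem gives injectivity of $W(K)\rightarrow W(K_0)$ under an odd-degree scalar \emph{extension}, whereas $q_{L/K}$ is the Scharlau \emph{transfer} of $q_{L/K_0}$ along $K_0/K$, and the transfer of the unit form is not the unit form. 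The reduction that works (used in the paper for Corollary \ref{trivd}) is to base-change to an odd-degree extension $K'/K$ over which $L$ becomes induced from $S$, and then use injectivity of restriction in mod $2$ cohomology and in the Witt group.

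For comparison, the paper's proof avoids the Knebusch sequence entirely and is local. Lemma \ref{floc} shows that over a local field of residue characteristic $\neq 2$ the image $H$ of $\Phi_L$ has metacyclic Sylow $2$-subgroups because the ramification is tame; since the Sylow $2$-subgroups of $G$ are non-metacyclic, the index $m=[G:H]$ is even, and is divisible by $4$ when the Sylow $2$-subgroup of $H$ is cyclic, so the formula $w_2(q_L)=\binom{m}{2}w_1(q_E)\cdot w_1(q_E)+mw_2(q_E)$ of (\ref{rel}) together with Proposition \ref{ww_1} forces $w_1(q_L)=w_2(q_L)=0$, i.e.\ local isometry with the unit form. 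Globalization is then by Hasse--Minkowski, with Hasse reciprocity supplying the place $v=2$ of ${\bf Q}$ and the archimedean signature giving the equivalence with ``totally real''. If you wish to keep your route, you must supply the actual Conner--Perlis analysis of non-metacyclic $2$-groups; as written, the proposal has a genuine gap at its central step.
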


We now describe the structure and the  content of the paper. In Section 2 we recall some basic properties of Galois algebras and Hasse-Witt invariants of quadratic forms. Section 3 is dedicated to the study of $2$-reduced groups and contains the proof of Theorem \ref{2red}. In Section 4 we compute the Hasse-Witt invariants of  degree $1$ and $2$ of the trace form of $G$-Galois algebras when the group $G$ is $2$-reduced; we  prove Theorem  \ref{main} and some of its corollaries. In Section 5 we assume that the base field $K$ is a global field and we prove some corollaries of Theorem \ref{main} in this case. Finally, in the last section,  we show how our results apply to a geometric set-up where we replace Galois algebras by Galois covers of schemes. 
\section{Preliminaries}

We recall that in this paper $K$ is a field of characteristic different from $2$, $K^s$ is a separable closure of $K$ and $G_K$ is the Galois group $\mathrm{Gal}(K^s/K)$. 
\subsection {Galois algebras} Let $G$ be a finite group. A $G$-Galois algebra over $K$  is an etale $K$-algebra $L$  of degree $n=|G|$, endowed with an action of $G$  such that the action of $G$ on 
$X(L)=\mathrm{Hom}^{\mathrm{alg}}(L, K^s)$  is simply transitive. The group $G_K$ acts by composition on $X(L)$. Fixing an element $\chi \in X(L)$ we attach to $L$ a group homomorphism $\Phi_L: G_K\rightarrow G$ defined by 
\begin{equation}\label{action1}\omega\chi=\chi\Phi_L(\omega)\ \forall \omega \in G_K.\end{equation}
We note that $\Phi_L$ is independant of the choice of $\chi$ up to conjugacy. If we denote by $E$ the subfield $\chi(L)$ of $K^s$, then $E$ is a Galois extension of $K$, with Galois group $\mathrm{Im}(\Phi_L)$,  and the algebra $L$ is $K$-isomorphic to the product of $m$ copies of $E$ where $m$ is the index of $\mathrm{Im}(\Phi_L)$ in $G$. This implies an isometry  
\begin{equation}\label{whit} q_L\simeq m\otimes q_E \end{equation}
of quadratic forms. Indeed when $\Phi_L$ is surjective the $G$-algebra $L$ is a Galois extension of $K$ with Galois group  $G$. In the case where $K={\bf R}$, the group $G_K$ is of order $2$ and so  $\Phi_L$ is defined,  up to conjugacy,  by an element $\sigma(L)\in G$ such that $\sigma(L)^2=1$. 

 We denote by $S(G)$ the group of permutations of $G$ and by $f: G\rightarrow S(G)$ the group  homomorphism induced by the action of $G$ on itself by left multiplication.  We may identify $G$ and $X(L)$ as sets via the map $g\rightarrow \chi g$. Under this identification the action of $G_K$ on $X(L)$ provides us with a group homomorphism 
\begin{equation}\label{action2}
\fonc{\varphi_L}{G_K}{S(G)}
{\omega}{(g\rightarrow \Phi_L(\omega)g)}
\end{equation} which is the composition of $\Phi_L$ with $f$. 
Identifying $G$ with $[1, \cdots, n]$ then  $f$ and $\varphi_L$ become respectively  group homomorphisms $f: G\rightarrow S_n$ and  $\varphi_L: G_K\rightarrow S_n$.  

\subsection {Hasse-Witt invariants}
 If   $q$ is a  non-degenerate quadratic form of rank $n$ over $K$, we   choose a diagonal form $<a_1,\cdots,a_n> $ of $q$ with $a_i\in K^{\times}$, and consider the cohomology classes $$(a_i)\in K^{\times}/(K^{\times })^2\simeq H^1(G_{K},\mathbb{Z}/2\mathbb{Z}).$$ 
For $1\leq m\leq n$, the \emph{$m$-th  Hasse-Witt invariant} of $q$ is defined to be  
\begin{equation}\label{inv1} w_m(q)=\sum_{1\leq i_1<\cdots<i_m\leq n} (a_{i_1})\cdots(a_{i_m})\in H^m(G_{K},\mathbb{Z}/2\mathbb{Z})\end{equation}
where $(a_{i_1})\cdots(a_{i_m})$ is the cup product. Furthermore we set $w_0(q)=1$ and $w_m(q)=0$ for $m>n$. It can be shown that $w_m(q)$ does not depend on the choice of the particular diagonalisation  of $q$.

In the case where $L/K$ is a $G$-Galois algebra  as considered in Section 2.1, it follows from the Whitney formula for the Hasse-Witt invariants of quadratic forms that  (\ref{whit}) implies the equalities: 
\begin{equation}\label{rel}
w_1(q_L)=mw_1(q_E)\ \mathrm{and}\ w_2(q_L)=\binom{m}{2} w_1(q_E)\cdot w_1(q_E)+mw_2(q_E).
\end{equation} 
\section { 2-reduced groups} 
\subsection{The $2$-lift  property}
For a finite group  $G$ we consider the group extensions of $G$ by ${\bf Z}/2{\bf Z}$:
$$1\rightarrow {\bf Z}/2{\bf Z}\rightarrow G'\rightarrow G\rightarrow 1. $$  The isomorphism classes of such extensions correspond bijectively to the group 
$H^2(G,  {\bf Z}/2{\bf Z})$. An extension is {\it split} if it corresponds to the zero class of $H^2(G,  {\bf Z}/2{\bf Z})$. In that case $G'$ is isomorphic to the direct product ${\bf Z}/2{\bf Z}\times G$. 

 For a subgroup $H$ of $G$ we let  $res^G_H$ denote the restriction map $$H^2(G,  {\bf Z}/2{\bf Z})\rightarrow H^2(H, {\bf Z}/2{\bf Z}).$$  Let  $\mathcal{S}$ be  the set of subgroups of $G$ of order $2$.  We consider the group homomorphism 
\begin{equation}\label{res}
\fonc{s_G}{H^2(G, {\bf Z}/2{\bf Z})}{\prod_{T\in \mathcal{S}}H^2(T, {\bf Z}/2{\bf Z})}
{x}{ (res^G_T(x))_{T\in \mathcal{S}}}
.\end{equation}
\begin{defn} An extension of $G$ by ${\bf Z}/2{\bf Z}$ is said to have  the $2$-lift property if it defines an element of $\mathrm{Ker}(s_G)$. Similarly   an element of $H^2(G,  {\bf Z}/2{\bf Z})$ is said to have   the $2$-lift property if it belongs to  $\mathrm{Ker}(s_G)$. 
\end{defn}
We note that the terminology is justified by the following tautological lemma:
\begin{lem} The following assumptions are equivalent:
\begin{enumerate}
\item $1\rightarrow {\bf Z}/2{\bf Z}\rightarrow G'\rightarrow G\rightarrow 1$ has the $2$-lift  property;
\item every element of $G$ of order $2$ has a lift in $G'$ of order $2$.
\end{enumerate}
\end{lem}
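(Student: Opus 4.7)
The plan is to unwind the cohomological statement via the standard dictionary between $H^2(G,\mathbb{Z}/2\mathbb{Z})$ and equivalence classes of central extensions, then analyze the situation subgroup-by-subgroup, using that each subgroup of order $2$ is cyclic and that $H^2(\mathbb{Z}/2\mathbb{Z},\mathbb{Z}/2\mathbb{Z})\cong \mathbb{Z}/2\mathbb{Z}$ with the nontrivial class corresponding precisely to the nonsplit extension $\mathbb{Z}/4\mathbb{Z}\to \mathbb{Z}/2\mathbb{Z}$.

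Concretely, let $c\in H^2(G,\mathbb{Z}/2\mathbb{Z})$ denote the class of the extension $1\to \mathbb{Z}/2\mathbb{Z}\to G'\to G\to 1$, and fix a subgroup $T\in\mathcal{S}$, say $T=\langle t\rangle$ with $t$ of order $2$. First I would recall the basic fact that $\mathrm{res}^G_T(c)$ is the class of the pulled-back extension
\begin{equation*}
1\rightarrow \mathbb{Z}/2\mathbb{Z}\rightarrow \pi^{-1}(T)\rightarrow T\rightarrow 1,
\end{equation*}
where $\pi\colon G'\to G$ is the projection. Since $\pi^{-1}(T)$ has order $4$, it is either isomorphic to $\mathbb{Z}/4\mathbb{Z}$ or to $(\mathbb{Z}/2\mathbb{Z})^2$, and the above extension splits precisely in the second case.

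Next I would observe that $\pi^{-1}(T)\cong (\mathbb{Z}/2\mathbb{Z})^2$ if and only if $t$ admits a lift $t'\in G'$ of order $2$. Indeed, the two preimages of $t$ in $G'$ are $t'$ and $t'\cdot z$, where $z$ generates the kernel $\mathbb{Z}/2\mathbb{Z}\subset G'$; one has $(t'\cdot z)^2 = (t')^2$ since $z$ is central of order $2$, so either both preimages have order $2$ (yielding $(\mathbb{Z}/2\mathbb{Z})^2$) or both have order $4$ (yielding $\mathbb{Z}/4\mathbb{Z}$). Combining these two steps, $\mathrm{res}^G_T(c)=0$ in $H^2(T,\mathbb{Z}/2\mathbb{Z})$ if and only if the order-$2$ generator $t$ of $T$ has a lift of order $2$ in $G'$.

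Finally, I would assemble the equivalence: $c\in\mathrm{Ker}(s_G)$ if and only if $\mathrm{res}^G_T(c)=0$ for every $T\in\mathcal{S}$, hence if and only if every element of $G$ of order $2$ admits an order-$2$ lift in $G'$, using that order-$2$ elements and order-$2$ subgroups of $G$ correspond bijectively. As the authors note, this is essentially tautological; the only content is the elementary analysis of the pullback extension over a cyclic group of order $2$, so there is no real obstacle beyond stating the correspondence cleanly.
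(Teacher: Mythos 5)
Your proof is correct and is exactly the standard unwinding that the paper leaves implicit: the authors state this lemma without proof, calling it tautological, and your argument (restriction of the class equals the class of the pullback extension over $T$, which splits iff $\pi^{-1}(T)\cong(\mathbb{Z}/2\mathbb{Z})^2$ iff $t$ lifts to an element of order $2$) is precisely the justification they have in mind. Nothing is missing; the observation that both preimages of $t$ share the same square is a nice touch that makes the dichotomy clean.
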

\noindent {\bf Remark.} It follows from the properties of the restriction map that for any subgroup $H$ of $G$ we have the inclusion: 
\begin{equation}\label{inc}res_H^G(\mathrm{Ker}(s_G))\subset \mathrm{Ker}(s_H).\end{equation}

\subsection{A cohomological characterization}
In this section   we shall be particularly  interested  by the groups $G$ such that $\mathrm{Ker}(s_G)=0$, namely  the groups $G$ such that the split extension is the unique extension of $G$ by ${\bf Z}/2{\bf Z}$ which has the $2$-lift property. We recall that a finite group $G$ is said to be {\it $2$-reduced group}  if   $H^2(G, {\bf Z}/2{\bf Z})$ contains no non-zero nilpotent of 
$H^*(G, {\bf Z}/2{\bf Z})$. 

\begin{thm}\label{nil} Let $G$ be a finite group.  Then the following assumptions are equivalent:
\begin{enumerate}
\item $\mathrm{Ker}(s_G)=0$;
\item the group $G$ is $2$-reduced. 
\end{enumerate}
\end{thm}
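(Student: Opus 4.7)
The plan is to establish the set-theoretic equality
\[
\mathrm{Ker}(s_G) \;=\; \{\, x \in H^2(G, {\bf Z}/2{\bf Z}) \;:\; x \text{ is nilpotent in } H^*(G, {\bf Z}/2{\bf Z})\,\},
\]
after which the equivalence of (1) and (2) is immediate from the definition of $2$-reduced group.

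For the inclusion $\supseteq$, suppose $x \in H^2(G,{\bf Z}/2{\bf Z})$ is nilpotent. Since every restriction map $\mathrm{res}^G_T$ is a ring homomorphism, $\mathrm{res}^G_T(x)$ is nilpotent in $H^*(T,{\bf Z}/2{\bf Z})$ for each $T \in \mathcal{S}$. But $T \cong {\bf Z}/2{\bf Z}$, so $H^*(T,{\bf Z}/2{\bf Z}) \cong {\bf F}_2[t]$ is a polynomial ring with no nonzero nilpotent elements in positive degree; hence $\mathrm{res}^G_T(x) = 0$ and $x \in \mathrm{Ker}(s_G)$.

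For the reverse inclusion $\subseteq$, I would invoke Quillen's detection theorem: an element of $H^*(G,{\bf Z}/2{\bf Z})$ is nilpotent if and only if its restriction to every elementary abelian $2$-subgroup of $G$ vanishes. So let $x \in \mathrm{Ker}(s_G)$ and let $E \leq G$ be an elementary abelian $2$-subgroup of rank $n$. By the inclusion (\ref{inc}) recorded in the preceding remark, $\mathrm{res}^G_E(x) \in \mathrm{Ker}(s_E)$, so it suffices to prove $\mathrm{Ker}(s_E) = 0$. Writing $H^*(E,{\bf Z}/2{\bf Z}) = {\bf F}_2[z_1,\ldots,z_n]$ with $\deg z_i = 1$, any $y \in H^2(E,{\bf Z}/2{\bf Z})$ takes the form $y = \sum_{i \leq j} a_{ij} z_i z_j$. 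The order-$2$ subgroups $T_v$ of $E$ correspond to vectors $v \in {\bf F}_2^n \setminus \{0\}$, and on $H^1$ the restriction sends $z_i \mapsto v_i t$, so on $H^2$ it sends $y$ to the scalar $\sum_{i \leq j} a_{ij} v_i v_j \in {\bf F}_2$. The condition $y \in \mathrm{Ker}(s_E)$ therefore forces the polynomial function
\[
v \;\longmapsto\; \sum_{i<j} a_{ij} v_i v_j + \sum_i a_{ii} v_i
\]
(using $v_i^2 = v_i$ in ${\bf F}_2$) to vanish on every nonzero $v$, and hence on all of ${\bf F}_2^n$. Since the square-free monomials $v_i$ and $v_i v_j$ ($i<j$) are part of the standard ${\bf F}_2$-basis of the ring of functions ${\bf F}_2^n \to {\bf F}_2$, all coefficients $a_{ij}$ must vanish, and $y = 0$.

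The only non-elementary input is Quillen's theorem on the nilradical of the mod $2$ cohomology ring; the rest is the functoriality in (\ref{inc}) together with a direct linear-algebra calculation inside $H^2$ of an elementary abelian $2$-group. The delicate point to get right is recognising that the order-$2$ subgroups of $E$ suffice to detect the vanishing of degree-$2$ classes, which is what makes the reduction from Quillen's theorem (phrased in terms of elementary abelian subgroups) to the statement about $\mathcal{S}$ (phrased in terms of order-$2$ subgroups) go through.
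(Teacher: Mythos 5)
Your proof is correct and follows the same overall architecture as the paper's: both directions are handled by showing that $\mathrm{Ker}(s_G)$ coincides with the set of nilpotent classes in degree $2$, the easy inclusion coming from the reducedness of $H^*(T,{\bf Z}/2{\bf Z})\cong{\bf F}_2[t]$ for $T$ of order $2$, and the hard inclusion coming from the functoriality in (\ref{inc}), the vanishing of $\mathrm{Ker}(s_E)$ for elementary abelian $2$-groups $E$, and Quillen's detection theorem for the nilradical. The one place where you genuinely diverge is the sublemma $\mathrm{Ker}(s_E)=0$: the paper argues group-theoretically, observing that a central extension $1\to{\bf Z}/2{\bf Z}\to E'\to E\to 1$ with the $2$-lift property has every element of $E'$ of order dividing $2$ (both lifts of an order-$2$ element square to the identity once one of them does), so $E'$ is itself elementary abelian and the extension splits; you instead compute directly in $H^*(E,{\bf Z}/2{\bf Z})={\bf F}_2[z_1,\dots,z_n]$ and identify the restrictions to the order-$2$ subgroups with evaluation of a square-free quadratic polynomial on ${\bf F}_2^n$, concluding from the linear independence of the square-free monomials in the ring of functions ${\bf F}_2^n\to{\bf F}_2$. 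Your computation is correct (the passage $v_i^2=v_i$ and the inclusion of $v=0$ are handled properly) and has the merit of making the detection of degree-$2$ classes by order-$2$ subgroups completely explicit, whereas the paper's argument is shorter and avoids choosing coordinates, at the cost of invoking the dictionary between $H^2$ and central extensions. Both are complete proofs.
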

\begin{proof}  
  The proof of the theorem is an immediate consequence of the following lemma: 
 \begin{lem}\label{key}Let $x$ be an element of $H^2(G, {\bf Z}/2{\bf Z})$. Then the following properties are equivalent: 
 \begin{enumerate}
  \item $x$ is a nilpotent element of the cohomological ring $H^*(G, {\bf Z}/2 {\bf Z})$; 
 \item $x$ has the $2$-lift property.
 \end{enumerate}
 \end{lem} 
  \begin{proof} Let $x\in H^2(G, {\bf Z}/2{\bf Z})$ be a nilpotent element of $H^*(G, {\bf Z}/2{\bf Z})$. For $T\in \mathcal{S}$, the even degree subring $H^{2*}(T, {\bf Z}/2{\bf Z})$  of 
$H^{*}(T, {\bf Z}/2{\bf Z})$ is isomorphic to the  polynomial ring $\mathbb F_2[z_2]$ in one variable,  generated by the generator  $z_2$ of $H^2(T, {\bf Z}/2{\bf Z})$. Since  this ring  is reduced,  we conclude that $res^G_T(x)=0$   and so that $x$, by definition, has the $2$-lift  property.  We now consider an element $x\in H^2(G, {\bf Z}/2{\bf Z})$ having the $2$-lift property. It follows from (\ref{inc}) that for any abelian $2$-elementary  subgroup $H$, then   $res^G_H(x)$ has the $2$-lift   property. We now have: 
\begin{lem}\label{split} For any   elementary abelian  $2$-group $H$, then we have $\mathrm{Ker}(s_H)=0$.
\end{lem}
\begin{proof}   Suppose that  $1\rightarrow {\bf Z}/2{\bf Z}\rightarrow H'\rightarrow H\rightarrow 1$ is an exact sequence having the $2$-lift  property. Then any $h$ of $H'$ is   the lift of an element of $H$  and so satifies $h^2=1$. Therefore $H'$ is an abelian $2$-elementary group and the sequence is split. 
\end{proof}
 It follows from   Lemma \ref{split}  that $res^G_H(x)=0$ for any abelian $2$-elementary subgroup.  By  a theorem of Quillen  \cite{Q} we know   that  every element  $x\in  H^*(G, {\bf Z}/2{\bf Z})$ which restricts to zero on any elementary abelian $2$-subgroup of $G$  is nilpotent. Therefore we conclude that $x$ is nilpotent. This completes the proof of Lemma \ref{key}. 
\end{proof}
\end{proof}

\noindent{\bf Remark.} We note that if $G$ is the abelian elementary group $({\bf Z}/2{\bf Z})^n$ the cohomological ring $H^*(G, {\bf Z}/2{\bf Z})$ is a  polynomial ring   
$\mathbb F_2[x_1, ...,x_n]$ and then, as expected,  has no non zero nilpotent element. 

It is useful to note the following result:
\begin{cor}\label{sylow} Let $G$ be a finite group. Suppose that  the Sylow $2$-subgroups of $G$ are $2$-reduced  then $G$ is $2$-reduced. 
\end{cor}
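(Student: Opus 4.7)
The plan is to combine two ingredients that are already essentially available: the functoriality statement (\ref{inc}), which says that restriction to a subgroup $H$ sends $\mathrm{Ker}(s_G)$ into $\mathrm{Ker}(s_H)$, together with the classical fact that restriction to a Sylow $2$-subgroup is injective on mod $2$ cohomology (because the composition $\mathrm{cor}^G_S \circ \mathrm{res}^G_S$ is multiplication by the odd integer $[G:S]$, hence an isomorphism on $H^*(G,\mathbb{Z}/2\mathbb{Z})$).

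First I would invoke Theorem~\ref{nil} to restate the hypothesis and the conclusion purely in terms of the map $s_{?}$: being $2$-reduced is equivalent to having trivial kernel for $s_{?}$. So the task reduces to showing that $\mathrm{Ker}(s_G)=0$ whenever $\mathrm{Ker}(s_S)=0$ for a Sylow $2$-subgroup $S$ of $G$.

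Next, take $x \in \mathrm{Ker}(s_G)$. By the inclusion (\ref{inc}) applied to $H=S$, we have $\mathrm{res}^G_S(x)\in \mathrm{Ker}(s_S)$, which is zero by hypothesis. So $x$ lies in the kernel of the restriction map $\mathrm{res}^G_S: H^2(G,\mathbb{Z}/2\mathbb{Z})\to H^2(S,\mathbb{Z}/2\mathbb{Z})$. Since $S$ is a Sylow $2$-subgroup, this restriction map is injective, whence $x=0$. This gives $\mathrm{Ker}(s_G)=0$, and a second application of Theorem~\ref{nil} yields that $G$ is $2$-reduced.

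There is no real obstacle here; the argument is a direct two-line consequence of Theorem~\ref{nil}, the remark (\ref{inc}), and the standard Sylow injectivity for mod $p$ cohomology. If anything, the only point that merits a brief explicit mention in the write-up is the justification of the Sylow injectivity, which the reader may or may not want recalled.
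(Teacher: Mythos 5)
Your argument is correct and is essentially identical to the paper's own proof: both use Theorem~\ref{nil} to translate $2$-reducedness into $\mathrm{Ker}(s_{?})=0$, apply the inclusion (\ref{inc}) with $H=S$, and conclude via the injectivity of $\mathrm{res}^G_S$ on mod $2$ cohomology for a subgroup of odd index. Your explicit justification of that injectivity via $\mathrm{cor}^G_S\circ\mathrm{res}^G_S=[G:S]$ is a welcome (and standard) amplification of what the paper states without proof.
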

\begin {proof}
Let $S$ be a Sylow $2$-subgroup of $G$.  The group $S$ being $2$-reduced,  it follows from (\ref{inc}) that 
$$res_{S}^G(\mathrm{Ker}(s_G))\subset \mathrm{Ker}(s_{S})=0.$$  Since the index of $S$ in $G$ is odd,  the restriction map  is an injection and  so  
$\mathrm{Ker}(s_G))=0$.
\end{proof}

\subsection{Proof of Theorem \ref{2red}}
Our aim is to check that every group appearing in Theorem \ref{2red} is  $2$-reduced. It follows from Corollary \ref{sylow} that in order to prove i) it suffices to prove that cyclic or abelian elementary $2$-groups are $2$-reduced. The case of abelian elementary $2$-groups  has been treated in Lemma \ref{split}.  Let 
\begin{equation}\label{splitsplit}1\rightarrow {\bf Z}/2{\bf Z}\stackrel{i}\rightarrow G'\stackrel{s}\rightarrow G\rightarrow 1\end{equation} be an extension with the $2$-lift  property. We set   $\mathrm{im}(i)=T=\{e, t\}$. 
\begin{lem}\label{cyc} Assume that $G$ is a $2$-group. Then for any cyclic subgroup $V$ of $G$  the subgroup $s^{-1}(V)$ is  abelian and  equal to a direct product of  $T$ by a subgroup $U$ of $G'$.  
\end{lem}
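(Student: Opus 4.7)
The plan is to argue in three steps: first establish that $s^{-1}(V)$ is abelian, then rule out that it is cyclic, and finally conclude the direct product decomposition.

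For the first step, since $\mathrm{Aut}(T)$ is trivial, conjugation in $G'$ acts trivially on the normal subgroup $T$, so $T$ is central in $G'$, and in particular $T\subseteq Z(s^{-1}(V))$. Then $s^{-1}(V)/T\cong V$ is cyclic, and the standard fact that a group with cyclic central quotient is abelian shows that $s^{-1}(V)$ is abelian. So $s^{-1}(V)$ is an abelian $2$-group of order $2|V|$ containing $T$ with quotient isomorphic to the cyclic group $V$.

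For the second step, write $|V|=2^k$; the case $k=0$ is trivial, so assume $k\geq 1$ and let $v_0\in V$ be the unique element of order $2$. As $v_0$ is an element of order $2$ of $G$, the $2$-lift property furnishes a lift $y\in G'$ of $v_0$ with $y^2=e$. Any such lift lies in $s^{-1}(V)$. On the other hand, if $s^{-1}(V)$ were cyclic, say generated by $w$ of order $2^{k+1}$, then the two preimages of $v_0$ would be $w^{2^{k-1}}$ and $w^{2^{k-1}}\cdot t$, both of order $4$, contradicting the existence of $y$. Hence $s^{-1}(V)$ is a non-cyclic abelian group of order $2^{k+1}$ whose quotient by $T$ is cyclic of order $2^k$, so its invariant factor decomposition must be $\mathbb Z/2\times \mathbb Z/2^k$.

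For the final step, any such decomposition exhibits $T$ as a direct factor with a complementary cyclic subgroup $U$ of order $2^k$ mapping isomorphically onto $V$ under $s$, giving $s^{-1}(V)=T\times U$. The only nontrivial point in the whole argument is the cyclicity obstruction, which is forced by the $2$-lift property applied to the unique involution of $V$; once that is in hand, the rest is the structure theorem for finite abelian $2$-groups.
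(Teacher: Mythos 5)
Your proof is correct, and its two main ingredients coincide with the paper's: $T$ is central (a normal subgroup of order $2$ always is), so $s^{-1}(V)$ is central-by-cyclic and hence abelian; and the $2$-lift property applied to the unique involution of $V$ is what controls the structure. Where you diverge is in the finishing move. The paper argues directly: it lifts a generator $v$ of $V$ to some $u\in G'$ and uses the $2$-lift property to see that $u$ has order exactly $|V|$ (both lifts of $v^{2^{k-1}}$ have the same square in the abelian group $s^{-1}(V)$, and that square must be trivial), so $U=\langle u\rangle$ is immediately a complement to $T$. You instead rule out cyclicity of $s^{-1}(V)$ and then invoke the classification of finite abelian groups to pin down the isomorphism type $\mathbf{Z}/2\times\mathbf{Z}/2^k$. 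That works, but note one loose stitch at the end: knowing the abstract isomorphism type does not by itself make the \emph{specific} subgroup $T$ a direct factor --- in $\mathbf{Z}/2\times\mathbf{Z}/2^k$ with $k\geq 2$ the order-$2$ subgroup inside the cyclic factor of order $2^k$ is not a direct summand. To close this you must also use that $s^{-1}(V)/T$ is cyclic of order $2^k$, which equals the exponent of $s^{-1}(V)$: lift a generator of the quotient, observe it has order $2^k$ and generates a subgroup meeting $T$ trivially. At that point you have reproduced the paper's direct construction, which is why the paper's route is the more economical of the two.
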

\begin{proof} Since  $T$ is a central subgroup of $s^{-1}(V)$ such that $s^{-1}(V)/T$ is cyclic then $s^{-1}(V)$ is an abelian group. Take a generator  $v$ of  $V$ and take $U$ as the subgroup generated by a lift $u$ of $v$.  Since the extension has the $2$-lift  property then $U$ is a cyclic group of order equal to the order of $V$ which does not contain $t$. We conclude that  $s^{-1}(V)$ is the direct product of the subgroup $U$ and $T$.
\end{proof}
When $G$ is a cyclic $2$-group we may  use Lemma \ref{cyc} with   $V=G$ and  conclude that every extension of $G$ with the $2$-lift  property is split. 

In order to study extensions of $G$ having the $2$-lift  property, Theorem \ref{nil} leads us to study more precisely the cohomology algebra $H^*(G, {\bf Z}/2{\bf Z})$.  Following Quillen \cite{Q71} we shall say that a family $\{H_i\}_{i\in I}$ of subgroups of  $G$  is {\it a detecting family}, if the map
$$H^*(G, {\bf Z}/2{\bf Z})\rightarrow \prod_{i\in I}H^*(H_i, {\bf Z}/2{\bf Z})$$ given by the restriction homomorphisms is injective. Since the $2$-lift  property is stable under any restriction map we deduce that any group which has a detecting family of $2$-reduced  subgroups is $2$-reduced. This is precisely the case for symmetric, dihedral, linear groups ${\bf {Gl}}_n( {\bf F}_r))$,  orthogonal groups ${\bf O}_n({\bf F}_r)$,  and $M_{12}$ where  the family of  elementary abelian $2$-subgroups provides us with a family of detecting groups (see \cite{Q71}, Corollary 3.5, Theorem 4.3 (4-5) and Lemma 4.6, \cite{Qu72}, Lemma 13 and \cite{AM}, VIII, Section 3.)  which, according to  Lemma \ref{split},  are $2$-reduced. 

Suppose  now that $G$ is the alternating group $A_n$, $n\geq 4$.  We know (\cite {Serre84}, Section 1.5) that the unique non trivial class of $H^2(A_n, {\bf Z}/2{\bf Z})$ is the restriction $res^{S_n}_{A_n}(s_n)$ where $s_n\in H^2(S_n, {\bf Z}/2{\bf Z})$ corresponds  to the extension 
\begin{equation}\label{sn}1\rightarrow {\bf Z}/2{\bf Z}\rightarrow \tilde {S_n} \rightarrow S_n\rightarrow 1\end{equation} which is 
   characterized by the property  that   transpositions in $S_n$ lift to elements of
order $2$, while products of two disjoint transpositions lift to elements 
of order $4$. We conclude that $res^{S_n}_{A_n}(s_n)$ does not have the $2$-lift property since a product of two disjoint transpositions has a lift of order $4$. Hence  $A_n$ is $2$-reduced. This completes the proof of the theorem.  \hfill $\square$

\noindent{\bf Remarks.} {\bf 1})    We can also deduce that $S_n$ is a $2$-reduced group from the description of      $H^2(S_n, {\bf Z}/2{\bf Z})$ given in \cite{Sc}.  This group   is a non-cyclic group of order $4$ for $n\geq 4$. The first of the three non-trivial extensions is the extension 

  $$1\rightarrow {\bf Z}/2{\bf Z}\rightarrow \tilde{S}_n \rightarrow S_n \rightarrow 1  $$ given in  (\ref{sn}) above. The second such extension is the extension 

  $$1 \rightarrow {\bf Z}/2{\bf Z} \rightarrow S'_n \rightarrow S_n\rightarrow 1$$ which is obtained by pulling  back, via  the sign character $\varepsilon_n:S_n\rightarrow {\bf C}^{\times}$,   the Kummer sequence

\begin{equation}\label{Kum}  1\rightarrow {\bf Z}/2{\bf Z}\simeq {\pm 1} \rightarrow {\bf C}^{\times}\rightarrow  {\bf C}^{\times}\rightarrow 1,  \end{equation}
induced by squaring on ${\bf C}^{\times}$.     
We prove  that in this case {\it  the lift in $S'_n$ of
any transposition in $S_n$ has order 4}. The third and final such extension is the extension 

  $$1\rightarrow  {\bf Z}/2{\bf Z}\rightarrow  S''_n \rightarrow  S_n \rightarrow  1$$
which  represents the class of the  sum of the two previous ones  in $H^2(S_n, {\bf Z}/2{\bf Z})$.  By the definition of Baer sums,
we may describe  $S''_n$ and prove that  {\it any lift in $S''_n$ of a transposition in $S_n$ has order $4$.} 
Therefore we conclude  that the unique extension of $S_n$ by ${\bf Z}/2{\bf Z}$ having the $2$-lift  property is the split extension and so that $S_n$ is $2$-reduced.
\vskip 0.1 truecm
\noindent {\bf 2)} Let $G$ be a group and let  $G\int {\bf Z}/2{\bf Z}$ be the wreath product. Recall that $G\int {\bf Z}/2{\bf Z}$ is the semi-direct product $G^2\rtimes {\bf Z}/2{\bf Z}$ where ${\bf Z}/2{\bf Z}$ is identified with the symmetric group $S_2$ and acts on $G^2$ by permuting the factors. Suppose that the set of elementary abelian $2$-subgroups  is a detecting family for the group $G$. Then it follows from a theorem of Quillen (see \cite{AM}, Theorem 4.3) that the same property holds for the wreath product $G\int {\bf Z}/2{\bf Z}$. Therefore we conclude that every group $G\int {\bf Z}/2{\bf Z}\int ...\int {\bf Z}/2{\bf Z}$ is  $2$-reduced. 
\vskip 0.1 truecm
 \noindent {\bf 3}) We know from Theorem \ref {2red} that amongst the groups of order $8$ the cyclic group, the elementary abelian $2$-group and the dihedral group are $2$-reduced.  One should note  that  on the contrary the quaternion group  and the abelian group ${\bf Z}/4{\bf Z}\times {\bf Z}/2{\bf Z}$ are not. Let us treat as an example the case of the quaternion group.  Let $G'$ be the semi-direct product of two cyclic groups of order $4$ defined by the presentation 
$$<u_1, u_2\ | u_1^4=u_2^4=e, u_2u_1u_2^{-1}=u_1^{-1}>.$$
One notes  that $u_1^2, u_2^2$ and $u_1^2u_2^2$ are the elements of order $2$ of $G'$ and that  $Z(G')=\{e, u_1^2, u_2^2, u_1^2u_2^2\}$ is the center of $G'$.  We set $T=\{e, u_1^2u_2^2\}$ and $G=G'/T$ and  we consider the exact sequence 
\begin{equation} \label{quat}1\rightarrow T\rightarrow G' \rightarrow G\rightarrow 1. \end{equation}
The group $G$ is the quaternion group of order $8$ and the extension (\ref {quat}) has the $2$-lift  property. One checks that every subgroup $H$  of $G'$ of order $8$ contains at least $2$ distinct elements  of order $2$. Therefore $H$ contains $Z(G')$ and  is commutative since $H/Z(G')$ is cyclic.  We conclude that $G'$ does not contain any quaternion subgroup of order $8$ and so that (\ref {quat}) is not split. 

\section{Hasse-Witt invariants of the trace form}

In this section we consider a $G$-Galois algebra where $G$ is a finite group and we denote  its trace form by $q_L$. We attach to $L/K$ the group homomorphisms  $\Phi_L: G_K\rightarrow G$ and $\varphi_L: G_K\rightarrow S_n$ introduced in Section 2. We recall that $\varphi_L$ is the composition of $\Phi_L$ with the group homomorphism $f:G\rightarrow S_n$ induced by left multiplication of $G$ on itself. Our aim is to compute the Hasse-Witt invariants of the trace form $q_L$. 
\subsection{The invariant  $w_1(q_L)$}
The  discriminant of the form $q_L$ is   by definition   the discriminant $d_{L/K}$ of the etale algebra $L/K$. The Hasse-Witt invariant $w_1(q_L)$ is the class $(d_{L/K})$ defined by this discriminant in $H^1(G_K, {\bf Z}/2{\bf Z})$. As a group homomorphism $G_K\rightarrow {\bf Z}/2{\bf Z}$ it  is the composition  $\varepsilon_n\circ \varphi_L$ where 
$\varepsilon_n:S_n\rightarrow \{\pm 1\}\simeq {\bf Z}/2{\bf Z}$ is the signature map. Thus,  $w_1(q_L)=0$ if and only if $f(\mathrm{Im}(\Phi_L))\subset A_n$. Indeed this will be always  the case if the order of $G$ is odd. We now consider the case where the rank of $L/K$ is even. The following proposition is well known at least for Galois extensions (see 
\cite{CP} Theorem 1.3.4.)

\begin{prop}\label{ww_1} $L/K$ be a $G$-Galois algebra of finite even degree. Then $w_1(q_L)=0$ if and only if one of the following assumptions is satisfied:
\begin{enumerate}
\item the Sylow $2$-subgroups of $G$ are non-cyclic;
\item the index of $\mathrm{Im}(\Phi_L)$ in $G$ is even. 
\end{enumerate}
\end{prop}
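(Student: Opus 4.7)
The plan is to exploit the characterization already recorded in the paragraph preceding the proposition: $w_1(q_L)=0$ if and only if $f(\mathrm{Im}(\Phi_L))\subseteq A_n$, where $f:G\to S_n$ is the left-regular representation and $n=|G|$. This reduces the statement to a purely group-theoretic question about when left multiplication by elements of a subgroup $H\leq G$ produces only even permutations, which I would settle by a cycle count followed by a Sylow argument.

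For the cycle count, for any $g\in G$ of order $d$ the permutation $f(g)$ decomposes $G$ into $|G|/d$ disjoint cycles of length $d$, so $\mathrm{sgn}(f(g))=(-1)^{(|G|/d)(d-1)}$. Thus $f(g)\notin A_n$ if and only if $d$ is even and $|G|/d$ is odd, equivalently the $2$-part of $d$ equals the $2$-part $|G|_2$ of $|G|$. The next observation is that $G$ contains an element whose order has $2$-part equal to $|G|_2$ if and only if the Sylow $2$-subgroups of $G$ are cyclic: given such an element, a suitable power has order exactly $|G|_2$ and generates a cyclic subgroup of that order inside some Sylow $2$-subgroup, forcing it to be cyclic, while the converse is immediate. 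This already yields the implication $(1)\Rightarrow w_1(q_L)=0$, since under $(1)$ no element of $G$ produces an odd $f(g)$, so $f(G)\subseteq A_n$.

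Under the complementary hypothesis that the Sylow $2$-subgroups of $G$ are cyclic, I would apply the same criterion to $H:=\mathrm{Im}(\Phi_L)$. A Sylow $2$-subgroup of $H$ embeds into a (cyclic) Sylow $2$-subgroup of $G$, hence is itself cyclic of order $|H|_2$, and $H$ therefore contains an element of order $|H|_2$. It follows that $f(H)\not\subseteq A_n$ precisely when $|H|_2=|G|_2$, i.e.\ when $[G:H]$ is odd; so in this regime $w_1(q_L)=0$ if and only if $[G:\mathrm{Im}(\Phi_L)]$ is even, which is condition $(2)$. Combining the two cases completes the equivalence. The main, though elementary, obstacle is the Sylow step identifying the existence of an element whose order has maximal $2$-part with the cyclicity of the Sylow $2$-subgroup; the rest is pure cycle counting and bookkeeping.
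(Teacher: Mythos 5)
Your proposal is correct and follows essentially the same route as the paper: reduce to the criterion $f(\mathrm{Im}(\Phi_L))\subseteq A_n$, count the $n/d$ disjoint $d$-cycles of $f(g)$ to see that $f(g)$ is odd exactly when the $2$-part of the order of $g$ equals $|G|_2$, and then translate this via a Sylow argument into the dichotomy of conditions (1) and (2). Your write-up is if anything slightly more explicit than the paper's (which restricts the cycle count to elements of $2$-power order and compresses the final case analysis into one sentence), but the ideas coincide.
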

\begin{proof} We start by proving a lemma. 
\begin{lem}\label {et} Let $G$ be a finite group of even order $n$ then the following properties are equivalent:
\begin {enumerate}
\item $\mathrm{Im}(f)\subset A_n$; 
\item the Sylow $2$-subgroups of $G$ are non-cyclic. 
\end{enumerate}
\end{lem}
\begin{proof} We write $n=2^an' $ with $a\geq 1$ and $n'$ odd.   Let $g\in G$ be an element of   $2$-power order,  $2^b$ say,   $b\leq a$. Each orbit of $\tilde g:=f(g)$ acting on $[1,...,n]$ is of length $2^b$ and so $\tilde g$ decomposes into a product of $2^{a-b}n'$ disjoint cycles of length $2^b$. Therefore we deduce that 
$$\varepsilon_n(\tilde g)=(-1)^{(2^b-1)2^{a-b}n'}=(-1)^{(n-2^{a-b}n')}.$$
We conclude that if the $2$-Sylow subgroups of $G$ are not cyclic the image by $f$ of any $2$-power order element of $G$ belongs to $A_n$ and so that $\mathrm{Imf}\subset A_n$, whereas,  if the $2$-Sylow subgroups of $G$ are cyclic, then the signature of  the image by $f$ of any element of order $2^a$ is  odd. 
\end{proof}
Following  the proof of the Lemma we observe that $w_1(q_L)=0$ if and only if $\mathrm{Im}(\Phi_L)$ does not  contain any element of order $2^a$ that is to say  if and only if (1) or (2) is satisfied. 

\end{proof}
\begin{cor} Let $L/K$ be a $G$-Galois algebra of either odd degree or of even degree,  satisfying the assumptions  of  Proposition \ref{ww_1}; then $w_i(q_L)=0$ if $i$ is odd. 
\end{cor}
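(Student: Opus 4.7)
The plan is to reduce the assertion to the single fact $w_1(q_L)=0$, and then close up with the product formula for odd-degree Hasse--Witt invariants of trace forms. Specifically, for any $G$-Galois algebra $L/K$ and any odd integer $i$ one has
\[
w_i(q_L) \;=\; w_1(q_L)\cdot w_{i-1}(q_L),
\]
which is (19.3) of \cite{GMS} (already invoked in the discussion following Corollary \ref{numb2}). Once $w_1(q_L)=0$ is known, an immediate induction on odd $i\geq 1$ yields $w_i(q_L)=0$ for every odd $i$.

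It remains therefore to check $w_1(q_L)=0$ in each of the two cases. In the even-degree case this is precisely the content of Proposition \ref{ww_1}, which is already proved.

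In the odd-degree case I would argue directly, following the cycle-type computation in the proof of Lemma \ref{et}. Since $n=|G|$ is odd, every $g\in G$ has odd order, and the permutation $f(g)\in S_n$ arising from left multiplication decomposes into $n/\mathrm{ord}(g)$ disjoint cycles of length $\mathrm{ord}(g)$. Each such cycle is of odd length and hence an even permutation, so $\varepsilon_n(f(g))=1$ for all $g\in G$. Consequently $f(\mathrm{Im}(\Phi_L))\subset A_n$, and
\[
w_1(q_L)\;=\;\varepsilon_n\circ\varphi_L\;=\;0.
\]

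There is no real obstacle here: both preliminary cases are covered by results already in the paper (Proposition \ref{ww_1} for even degree, and a routine signature calculation for odd degree), and the GMS product formula for odd-indexed Hasse--Witt invariants of trace forms of Galois algebras then propagates the vanishing to every odd $i$.
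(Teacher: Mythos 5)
Your proposal is correct and takes essentially the same route as the paper: reduce everything to $w_1(q_L)=0$ and propagate via the identity $w_i(q_L)=w_1(q_L)\cdot w_{i-1}(q_L)$ for odd $i$ from \cite{GMS}, (19.3). The paper's proof simply cites Proposition \ref{ww_1}; your explicit cycle-type computation for the odd-degree case just spells out the observation already made in Section 4.1 that $f(\mathrm{Im}(\Phi_L))\subset A_n$ whenever $|G|$ is odd.
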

\begin{proof} The result is an immediate consequence of Proposition \ref {ww_1} since we know that for any non-degenerate quadratic form and any odd integer $i$ the following equality holds:
$$w_1(q)\cdot w_{i-1}(q)=w_i(q)$$ (see \cite{GMS}, (19.3)).
\end{proof}

\subsection {The group $\mathrm{\bf Pin}(G)$}

  Let  $(V, q)$ be a quadratic form over $K$. We denote the  Clifford algebra of $q$ by $Cl(q)$. Recall that this is the quotient algebra $T(V)/J(q)$,  here $T(V)$ is the tensor algebra of $V$ and $J(q)$ is the two-sided ideal of $T(V)$ generated by the elements $x\otimes x-q(x)1$ when $x$ runs through the elements of $V$. We shall view $V$ as embedded in $Cl(q)$ in the natural way. If we write $q=<a_1, \cdots, a_n>$ with  orthogonal basis $\{e_1, \cdots,  e_n\}$, then $Cl(q)$ is generated as an algebra by the $e_i$'s,  with relations 
$$e_i^2=a_i,\  e_ie_j=-e_je_i, \mathrm{if}\ i\neq j.$$
The Clifford group $C^*(q)$ is the group of homogeneous invertible elements $x$ of $Cl(q)$ such that $xvx^{-1}\in V$ for all $v\in V$. The algebra $Cl(q)$ is endowed with an involutory anti-automorphism $x\rightarrow x_t$ with $(x_1\cdots x_m)_t=(x_m\cdots x_1)$ for $x_i\in V$. The map $Cl(q)\rightarrow Cl(q)$ defined by $x\rightarrow x_tx$ restricts to a group homomorphism $sp: C^*(q)\rightarrow K^{\times}$. This is the {\it spinor norm} of $C^*(q)$. We define the group ${\bf Pin}(q)$ as the kernel of the spinor norm homomorphism. The orthogonal map $v\rightarrow-v$ on $(V, q)$ extends to an involutory automorphism $I$ of $Cl(q)$. We let $r: {\bf Pin}(q)\rightarrow {\bf O}(q)$ be the group homomorphism given by $r(x): v\rightarrow I(x)vx^{-1}$. Let $n$ be an integer, let $V=(K^s)^n$ be the direct sum of $n$ copies of $K^s$ and let $t$ be the unit form on $V$ with 
$$t(f_i)=1, t(f_i, f_j)=0, i\neq j, $$
where $\{f_i, 1\leq i\leq n\}$ is the canonical  basis of $V$. We set ${\bf O}_n(K^s)={\bf O}(t)$ (resp. $ {\bf Pin}_n(K^s)= {\bf Pin}(t)$). The homomorphism $r$ yields  an exact sequence of groups
\begin{equation}\label{pinm} 1\rightarrow {\bf Z}/2{\bf Z}\rightarrow  {\bf Pin}_n(K^s)\rightarrow {\bf O}_n(K^s)\rightarrow 1, \end{equation}
where ${\bf Z}/2{\bf Z}$ is the group with two elements. 

We let  $G$  be a group of  order $n$ and let  $f:G\rightarrow S_n$ be the group homomorphism induced by left multiplication of $G$  on itself. We  denote by $i$  the standard embedding $S_n\rightarrow \mathbf{O}_{n}(K^s)$.  Pulling back (\ref{pinm}) by $i\circ f$ provides us with an exact sequence 

\begin{equation}\label{pin}   1\rightarrow {\bf Z}/2{\bf Z} \rightarrow {\bf Pin}(G) \rightarrow G\rightarrow  1. \end{equation}
We observe that since the isomorphism  $S(G)\rightarrow S_n$ is defined up to conjugacy, the class of $H^2(G, {\bf Z}/2{\bf Z})$ attached to  the group extension (\ref {pin})  is well-defined. 

\subsection {Proof of Theorem \ref{main} and Corollaries  \ref{triv} and \ref{trivd}} 
\
\subsubsection {Proof of Theorem \ref{main} and Corollary \ref{triv}.} The proof of  Theorem \ref{main} is a consequence of the equality (\ref{S}) and the following proposition: 
\begin{prop}\label {lift2} Let $G$ be a group of even order $n$. Then the following properties are equivalent: 
\begin{enumerate}
\item the group extension  ${\bf Pin}(G)$ has the $2$-lift  property;
\item $n\equiv 0\ \mathrm{or}\ 2\ \mathrm{mod}\ 8$.
\end{enumerate}
\end{prop}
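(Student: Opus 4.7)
The plan is a direct Clifford-algebra computation. The extension ${\bf Pin}(G)$ was defined by pulling back the double cover ${\bf Pin}_n(K^s)\to {\bf O}_n(K^s)$ along $i\circ f$, so a lift of $g\in G$ to ${\bf Pin}(G)$ is the same as a lift of the permutation matrix $i(f(g))$ to ${\bf Pin}_n(K^s)$. The crucial observation is that if $g\in G$ has order $2$, then $f(g)\in S_n$ is a \emph{fixed-point-free} involution, since $gx=x$ forces $g=e$; hence $f(g)$ decomposes as a product of exactly $m:=n/2$ disjoint transpositions. One would choose orbit representatives $x_1,\ldots,x_m$ for the action of $\langle g\rangle$ on $G$ so that $f(g)=\prod_{k=1}^m (x_k,\,gx_k)$.

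In the Clifford algebra of the unit form $(V,t)$ on $(K^s)^n$, set
$$v_k \;=\; \frac{1}{\sqrt{2}}\bigl(f_{x_k}-f_{gx_k}\bigr),\qquad 1\le k\le m,$$
which makes sense because $\sqrt{2}\in K^s$ when $\mathrm{char}(K)\ne 2$. Each $v_k$ is a unit vector in $(V,t)$, so $v_k^2=1$ and $v_k\in{\bf Pin}_n(K^s)$; the conjugation $u\mapsto I(v_k)uv_k^{-1}$ implements the transposition $(x_k,gx_k)$ on $V$. Since the transpositions are disjoint, the vectors $v_k$ involve pairwise disjoint basis elements and therefore pairwise anticommute. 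It follows that $w:=v_1v_2\cdots v_m$ maps to the permutation matrix $i(f(g))$ under $r$, and that $w_t w=1$ (collapsing $v_k^2=1$ from the innermost pair outward), so $w$ is one of the two preimages of $g$ in ${\bf Pin}(G)$. Using $v_kv_\ell=-v_\ell v_k$ for $k\ne\ell$ and $v_k^2=1$, an easy induction on $m$ yields
$$w^2 \;=\; (-1)^{m(m-1)/2}.$$

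The two lifts of $g$ to ${\bf Pin}(G)$ are $\pm w$, both with the same square $w^2$, so they have order $2$ precisely when $m(m-1)/2$ is even. A check modulo $4$ shows this happens iff $m\equiv 0$ or $1\pmod 4$, equivalently $n\equiv 0$ or $2\pmod 8$, establishing (1)$\Leftrightarrow$(2): the condition does not depend on which order-$2$ element of $G$ is chosen, and the identity of $G$ trivially lifts to an element of order $\le 2$, so the $2$-lift property for ${\bf Pin}(G)$ reduces exactly to the statement just proved. The only delicate point is the sign bookkeeping for $w^2$; verifying that $w$ lies in ${\bf Pin}$ (rather than merely in the Clifford group) and that one may work over $K^s$ (so $\sqrt{2}$ is available) are routine.
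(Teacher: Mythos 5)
Your proof is correct and follows essentially the same route as the paper: both arguments lift a fixed-point-free involution $f(g)$ to the product of the anticommuting unit vectors $(f_{x_k}-f_{gx_k})/\sqrt{2}$ in the Clifford algebra and count sign changes to get the square $(-1)^{m(m-1)/2}$ with $m=n/2$. The only (welcome) extra care you take is noting explicitly that both preimages $\pm w$ have the same square, so the conclusion is independent of the choice of lift.
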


\begin{proof}   Take any
element $z$ of order two in $G$.  Then the orbits of the left multiplication
by $z$ on $S_n$ all have order two. So  $z':=f(z)$  is the product of $n/2$
disjoint transpositions in $S_n$.  For each transposition $(i,j)$ of $S_{n}$,
we can construct a lift  to the Clifford algebra of $t$ by taking $\varepsilon_{i,j}=(e_i - e_j)/\sqrt{2}$. One easily checks  that each of these belongs  to ${\bf Pin}_n(K^s)$ and  
 has  square $1$.  Moreover $\varepsilon_{i,j}.\varepsilon_{k,l}=- \varepsilon_{k,l}.\varepsilon_{i, j}$  whenever $(i, j)$ and $(k, l)$ are disjoint transpositions of $S_n$. So,  by counting how many sign changes occur as we move
lifts of transpositions past each other, we see that the square of a lift of $z'$ is the identity if and only if $\frac{n}{2}(\frac{n}{2}-1)\ \equiv 0\ \mathrm{mod}\ 4$. This proves the equivalence. 
\end{proof} 
  We now return to the proof of the theorem; we  let  $L/K$ be a $G$-Galois algebra of  degree  $n$,  $n\equiv 0\ \mathrm{or}\ 2\ \mathrm{mod}\ 8$ and we assume that $G$ is $2$-reduced.  By Proposition \ref{lift2} we know that ${\bf Pin}(G)$ has the $2$-lift property;   since $G$ is $2$-reduced,  this implies  that the group extension (\ref{pin}) is split and so   the class $c_G$ is trivial.  Therefore Theorem \ref{main}  follows from the equality (\ref{S}) whereas  Corollary \ref{triv} is a consequence of  Theorem \ref {main} and Proposition \ref {ww_1}. \hfill   $\square$
  \newline
 
  \noindent{\bf Remarks}  {\bf 1)} One  should note that,  in order to prove that $w_2(q_L)=0$,  \cite{Serre84}, the equality (\ref{S})  can be replaced by a  slightly weaker result (see \cite{Cassou}, Remark 6.6). 
  
  \noindent {\bf 2)}   Suppose that  $G$ is the group $PSL_2(\mathbb F_q), q\equiv 5\ \mathrm{mod}\ 8$. This is a group of order $n=q(q^2-1)/2$ with  elementary abelian Sylow $2$-subgroups. It follows from Theorem \ref{2red} that $G$ is $2$-reduced. However, since $n\equiv 4\ \mathrm{mod}\  8$, we deduce from Proposition \ref{lift2} that ${\bf Pin}(G)$ does not have the $2$-lift property and so that (\ref {pin}) is not split. It can be proved in this case that ${\bf Pin}(G)=SL_2(\mathbb F_q)$ whose Sylow $2$-subgroups are quaternion groups of order $8$. 
  \subsubsection {Proof of Corollary \ref{trivd}} 
  If  $L/K$ is a $G$-Galois algebra and $S$  a Sylow $2$-subgroup  of $G$, we know that there exists  a field  extension $K'/K$  of odd degree,  an  $S$-Galois algebra $M/K'$ and  an isomorphism of $G$-Galois algebras over $K'$
\begin{equation}\label{odd}L':=K'\otimes_{K}L\simeq \mathrm{Ind}^G_S(M) \end{equation}
(see \cite{Bayer94}, Proposition 2.11). We recall that if  $\Phi_M: G_{K'}\rightarrow S$ is the group homomorphism attached to $M/K'$, then the composition of $\Phi_M$ by the canonical injection $S\rightarrow G$ is a group homomorphism attached to $\mathrm{Ind}^G_S(M)$.  From (\ref {odd})  we deduce an isometry of quadratic forms 
$q_{L'}\simeq m\otimes q_M$ where $m$ is the index of $S$ in $G$. Since $S$ is a subgroup of $H$ we may consider the $H$-Galois algebra $E=\mathrm{Ind}^H_S(M)$.  As a $K'$-algebra $E$ is the product of $r$ copies of $M$ where $r$ is the index of $S$ in $H$. Hence  we obtain an isometry of quadratic forms $q_E\simeq r\otimes q_M$. Applying Theorem \ref{main}  to the $H$-Galois algebra $E$ we obtain that $w_1(q_E)=w_2(q_E)=0$. Since $r$ and $m$ are odd integers,  it suffices to apply (\ref{rel}) to deduce from the triviality of the Hasse-Witt invariants of $q_E$ in degree $1$ and $2$    that $w_1(q_M)=w_2(q_M)=0$ and so that $w_1(q_{L'})=w_2(q_{L'})=0$.  The group $G_{K'}$ is a subgroup of $G_K$ of odd index, therefore the restriction maps 
$$\mathrm{Res}^{G_K}_{G_{K'}}: H^i(G_K, {\bf Z}/2{\bf Z})\rightarrow H^i(G_{K'}, {\bf Z}/2{\bf Z})$$ are injective.  Since $\mathrm{Res}^{G_K}_{G_{K'}}w_i(q_L)=w_i(q_{L'})$ for each  integer $i$,  we conclude that $w_1(q_L)=w_2(q_L)=0$.\hfill $\square$
  \subsection{  Further results for Hasse-Witt invariants of the trace form }  
  Let  $L/K$ be    a $G$-Galois algebra. If   $G$ is the direct product of the subgroups $G_1$ and $G_2$   we set   $L_1:=L^{G_2}$  and  $L_2:=L^{G_1}$.  Then $L_1$ and $L_2$ are respectively $G_1$ and $G_2$-Galois algebras and $L$ and $L_1\otimes_K L_2$ are isomorphic $K$-algebras. This implies an  isometry of the $K$-forms 
\begin{equation}\label{isom} q_L\simeq q_{L_1}\otimes q_{L_2}. 
\end{equation}
For the sake of simplicity we set   
$$H(G_K, {\bf Z}/2{\bf Z})^\times=\{1+a_1+a_2\in \bigoplus_{0\leq i\leq 2}H^i(G_K, {\bf Z}/2{\bf Z}); a_i\in H^i(G_K, {\bf Z}/2{\bf Z})\}.$$
This is an abelian group under the law 
$$(1+a_1+a_2)(1+b_1+b_2)=(1+(a_1+b_1)+(a_2+b_2+ (a_1)(b_1)).$$
For a form $q$ we set $w(q):=1+w_1(q)+w_2(q) \in H(G_K, {\bf Z}/2{\bf Z})^\times$.  We recall that $w(q_1\oplus q_2)=w(q_1)w(q_2)$.
\begin{prop}\label{syl2}  Let $L/K$ be a $G$-Galois algebra and let $S$ be a Sylow $2$-subgroup of $G$. We assume that $S$ is the direct product of non-trivial subgroups $G_1$ and  $G_2$ and that either $G_1$ or $G_2$ is non-cyclic. Then one has the equalities:
$$w_1(q_L)=w_2(q_L)=0.$$
\end{prop}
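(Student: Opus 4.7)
My approach has two stages: reduce from $G$ to its Sylow $2$-subgroup $S$, and then use the direct product decomposition $S = G_1 \times G_2$ to write $q_L$ as a tensor product. For the first stage I apply the standard odd-degree descent used in the proof of Corollary \ref{trivd}: there is an extension $K'/K$ of odd degree $m = [G:S]$ and an $S$-Galois algebra $M/K'$ such that $L' := K' \otimes_K L \simeq \mathrm{Ind}^G_S(M)$, hence $q_{L'} \simeq m \otimes q_M$. Since restriction $H^i(G_K, \mathbb{Z}/2\mathbb{Z}) \to H^i(G_{K'}, \mathbb{Z}/2\mathbb{Z})$ is injective and the Whitney formulas applied to the $m$-fold orthogonal sum $m \otimes q_M$ (with $m$ odd) show that vanishing of $w_1(q_M), w_2(q_M)$ forces the same for $q_{L'}$, I may replace $(G, L/K)$ by $(S, M/K')$ and henceforth assume $G = S = G_1 \times G_2$.

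Setting $L_1 = L^{G_2}$ and $L_2 = L^{G_1}$ gives respective $G_1$- and $G_2$-Galois algebras with $L \simeq L_1 \otimes_K L_2$, so $q_L \simeq q_{L_1} \otimes q_{L_2}$ by (\ref{isom}). Assume without loss of generality that $G_1$ is non-cyclic; then Proposition \ref{ww_1} yields $w_1(q_{L_1}) = 0$. Diagonalising $q_{L_2} = \langle b_1, \ldots, b_{n_2} \rangle$ with $n_2 = |G_2|$, I expand the tensor product as the orthogonal sum $q_L \simeq \bigoplus_{j=1}^{n_2} b_j \cdot q_{L_1}$, where $b_j \cdot q_{L_1}$ denotes the form $q_{L_1}$ scaled by $b_j$.

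A short diagonalisation computation, using the identity $(b)^2 = (b)(-1)$ in $H^2(G_K, \mathbb{Z}/2\mathbb{Z})$, gives, for every $b \in K^{\times}$, the formulas $w_1(b \cdot q_{L_1}) = n_1(b) + w_1(q_{L_1})$ and $w_2(b \cdot q_{L_1}) = \binom{n_1}{2}(b)(-1) + (n_1 - 1)(b) \cdot w_1(q_{L_1}) + w_2(q_{L_1})$. Since $n_1 = |G_1|$ is even and $w_1(q_{L_1}) = 0$, these reduce to $w_1(b \cdot q_{L_1}) = 0$ and $w_2(b \cdot q_{L_1}) = \binom{n_1}{2}(b)(-1) + w_2(q_{L_1})$. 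Iterating the Whitney multiplicativity $w(q_L) = \prod_{j=1}^{n_2} w(b_j \cdot q_{L_1})$ and truncating to degrees at most $2$ (no cross-terms appear in degree $2$, since every factor already has vanishing degree-$1$ part), I obtain $w_1(q_L) = 0$ and $w_2(q_L) = \binom{n_1}{2}(-1) \cdot w_1(q_{L_2}) + n_2 \cdot w_2(q_{L_1})$.

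It remains to check that both terms vanish modulo $2$: the coefficient $n_2$ is even because $G_2$ is a non-trivial $2$-group, and the non-cyclicity of the $2$-group $G_1$ forces $n_1 = 2^k$ with $k \geq 2$, so $\binom{n_1}{2} = 2^{k-1}(2^k - 1)$ is even. The main obstacle I anticipate is the careful bookkeeping of the degree-$2$ Hasse-Witt invariant of a scaled form and of the resulting tensor product; once the two key formulas for $w_i(b \cdot q_{L_1})$ are established, everything else reduces to a short $2$-adic parity count.
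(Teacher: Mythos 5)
Your proof is correct and follows essentially the same route as the paper's: reduce to $G=S$ by odd-degree descent, write $q_L\simeq q_{L_1}\otimes q_{L_2}$, expand it as an orthogonal sum of scaled copies of the trace form of the non-cyclic factor, and kill the degree-$1$ and degree-$2$ invariants by the parity of $\binom{n_1}{2}$ and $n_2$. The only cosmetic differences are that you derive the scaling formula by hand where the paper cites Berhuy's Proposition 1.1 (its formula (\ref{bes})), and that the roles of the two factors are swapped.
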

\begin{proof} By using once again \cite{Bayer94} Proposition 2.1.1 it is easy to check that we may assume that $G=S$.  Suppose that $G_2$ is a non-cyclic group of order $n$.  By  (\ref {isom}) we have an isometry  of  quadratic forms 
$ q_L\simeq q_{L_1}\otimes q_{L_2}$. After choosing   a diagonalisation  $<a_1, \cdots, a_r>$ of $q_{L_1}$,   we obtain an isometry 
\begin{equation}
q_L\simeq \bigoplus_{1\leq i\leq r}<a_i>\otimes q_{L_2}.
 \end{equation}
 By \cite{Ber} Proposition 1.1 we know  that 
 \begin{equation}\label{bes} w(<a>\otimes q_{L_2})= 1+ n(a)+w_1(q_{L_2})+\binom {n}{2}(a)\cdot (a)+(n-1)(a)\cdot w_1(q_{L_2})+w_2(q_{L_2}) 
 \end{equation} for  any element $a\in K^{\times}$. Therefore, 
 since $n\ \equiv  0\  \mathrm{mod}\ 4$ and $G_2$ is non-cyclic,  it follows from (\ref {bes}) that   $w(<a_i>\otimes q_{L_2})=1+w_2(q_{L_2})$ for each integer $i$. Therefore $w(q_L)=(1+w_2(q_{L_2}))^r=1$ since $r$ is a power of $2$.
 \end{proof}
\begin{cor} Let L/K be a G-Galois algebra and let S be a Sylow 2-subgroup of G. We assume that $S$ is a non-metacyclic abelian group. Then one has the equalities
$$w_1(q_L)=w_2(q_L)=0.$$
\end{cor}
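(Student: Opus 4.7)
The plan is to deduce this corollary directly from Proposition \ref{syl2} by exhibiting the required direct product decomposition of $S$. The key structural fact I need is that a finite abelian group is metacyclic if and only if it can be generated by two elements, equivalently, if and only if its invariant factor decomposition has at most two cyclic components.

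First, I will unpack the hypothesis. Since $S$ is an abelian $2$-group, by the structure theorem it admits a decomposition $S\simeq C_1\times C_2\times\cdots\times C_k$ into non-trivial cyclic factors. The assumption that $S$ is non-metacyclic forces $k\geq 3$, since an abelian group expressible as an extension of a cyclic group by a cyclic group is generated by $2$ elements and hence has at most two cyclic factors. Grouping the factors, I can write $S=G_1\times G_2$ with $G_1:=C_1$ and $G_2:=C_2\times\cdots\times C_k$. Both subgroups are non-trivial, and since $G_2$ has at least $k-1\geq 2$ non-trivial cyclic factors, it is non-cyclic.

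With this decomposition in hand, the hypotheses of Proposition \ref{syl2} are satisfied for the $G$-Galois algebra $L/K$: the Sylow $2$-subgroup $S$ of $G$ is the direct product of the non-trivial subgroups $G_1$ and $G_2$, and $G_2$ is non-cyclic. Applying Proposition \ref{syl2} then yields $w_1(q_L)=w_2(q_L)=0$, as desired.

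There is essentially no obstacle in this argument, as the corollary is simply a convenient packaging of Proposition \ref{syl2}; the only point to be verified is the combinatorial observation that a non-metacyclic abelian group always decomposes into a product in which one factor is non-cyclic, and this is immediate from the rank/generator characterization of metacyclicity recalled above.
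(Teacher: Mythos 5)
Your argument is correct and follows essentially the same route as the paper: decompose the abelian $2$-group $S$ into cyclic factors, observe that non-metacyclicity forces at least three factors, and then group them so as to satisfy the hypotheses of Proposition \ref{syl2}. The only difference is that you spell out the grouping $G_1=C_1$, $G_2=C_2\times\cdots\times C_k$ explicitly, which the paper leaves implicit.
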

\begin {proof} Since $S$ is abelian it has a canonical decomposition into a product of cyclic groups. Since $S$ is non-metacyclic the decomposition of $S$ contains at least three factors. Therefore $S$ satisfies the hypotheses of Proposition \ref{syl2}.
\end{proof}
When the group $G$ is abelian it decomposes into a direct product $S\times S' $ where $S$ is the Sylow $2$-subgroup of $G$ and $S'$ is of odd order $m$ say. Since $S'$ is of odd order,  $q_{L^S}\simeq <1, \cdots, 1>$ by \cite{Bayer90} and so  $q_L$ is isomomorphic to $m\otimes q _E$ where $E$ is the $S$-Galois algebra $L^{S'}$. We assume that  $S$ is of order $2^r$, with $r\geq 3$,  (for $r\leq 2$ the form $q_L$ has been described in \cite{Bayer94} Section 6.1). If $S$ is either cyclic or equal to  a direct product of $s\geq 3$ non-trivial cyclic groups we have computed the Hasse-Witt invariants $w_1(q_L)$ and $w_2(q_L)$ (see Theorem \ref {main} and Proposition \ref{syl2}). We now assume that $S$ is product of two cyclic groups. We know that $w_1(q_L)=0$;  our aim is now to compute $w_2(q_L)$. In general we  observe that $S$ is not $2$-reduced in this case (see Section 3.3, Remarks 3)). We write  $S=S_1\times S_2$  where  $S_i$ is of order $2^{r_i}$ for $i\in \{1, 2\}$ and $r_1\geq 1, r_2\geq 2$. We set $E_1=E^{S_2}, E_2=E^{S_1}$ and we denote by $d_i$ the discriminant $d_{E_i/K}$. 
\begin{prop} Let $G$ be an abelian group  and let $L/K$ be a $G$-Galois algebra. We assume that the Sylow $2$-subgroup $S$ of $G$ is a product of two non-trivial cyclic groups. Then we have: 
\begin {enumerate}
\item $w_2(q_L)=(d_1d_2,d_2)$ if   $S$ has a direct factor of order $2$; 
\item $w_2(q_L)=(d_1, d_2)$ otherwise.  
\end{enumerate}  
\end{prop}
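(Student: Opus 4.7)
The plan is to reduce to the Sylow $2$-part of $G$ and then apply the tensor-product formula (\ref{bes}). The paragraph preceding the statement already provides the reduction $q_L \simeq m\otimes q_E$ with $m$ odd and $E = L^{S'}$ the associated $S$-Galois algebra. Since $S$ is non-cyclic, Proposition \ref{ww_1} gives $w_1(q_L) = 0$; combined with (\ref{rel}) and the oddness of $m$, this yields $w_1(q_E) = 0$, and in turn $w_2(q_L) = w_2(q_E)$. It thus suffices to compute $w_2(q_E)$.

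Next I would write $E \simeq E_1 \otimes_K E_2$ and invoke (\ref{isom}) to obtain $q_E \simeq q_{E_1} \otimes q_{E_2}$. After a diagonalisation $q_{E_1} = <a_1, \ldots, a_{n_1}>$ with $n_1 = |S_1| = 2^{r_1}$, one has $q_E \simeq \bigoplus_{i=1}^{n_1} <a_i>\otimes q_{E_2}$. Applying (\ref{bes}) to each summand with $n = n_2 = 2^{r_2}$, and using $r_2 \geq 2$ so that $n_2 \equiv 0$, $\binom{n_2}{2} \equiv 0$ and $n_2 - 1 \equiv 1 \pmod 2$, each factor simplifies to
$$w(<a_i> \otimes q_{E_2}) = 1 + \alpha + \bigl((a_i)\alpha + \beta\bigr),$$
where $\alpha = w_1(q_{E_2}) = (d_2)$ and $\beta = w_2(q_{E_2})$.

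I then compute the product $w(q_E) = \prod_{i=1}^{n_1} w(<a_i>\otimes q_{E_2})$ in $H(G_K, {\bf Z}/2{\bf Z})^\times$. Pairing the factors, the group law gives
$$\bigl(1 + \alpha + (a_i)\alpha + \beta\bigr)\bigl(1 + \alpha + (a_j)\alpha + \beta\bigr) = 1 + \alpha^2 + (a_i a_j)\alpha,$$
the degree-$1$ parts cancelling and the two copies of $\beta$ summing to zero. Because cup products of total degree $\geq 3$ are truncated in $H(G_K, {\bf Z}/2{\bf Z})^\times$, multiplying the $n_1/2$ such pairs yields
$$w_2(q_E) = (n_1/2)\,\alpha^2 + \Bigl(\prod_{i=1}^{n_1} a_i\Bigr)\alpha = (n_1/2)\,(-1)(d_2) + (d_1)(d_2),$$
using $\prod_i a_i = d_1$ modulo squares and the identity $(a)(a) = (-1)(a)$. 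When $r_1 \geq 2$ the integer $n_1/2 = 2^{r_1-1}$ is even and the first term vanishes, giving $w_2(q_L) = (d_1, d_2)$. When $r_1 = 1$ one has $n_1/2 = 1$ and the expression reduces to $(-1)(d_2) + (d_1)(d_2) = (-d_1)(d_2) = (d_1 d_2, d_2)$.

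The hard part will be the careful bookkeeping of the parities of $n_2$, $\binom{n_2}{2}$, $n_2-1$, and—crucially—$n_1/2$, since it is the parity of $n_1/2$ that produces the dichotomy in the statement; the identity $(a)^2 = (-1)(a)$ is what identifies $(-d_1, d_2)$ with $(d_1 d_2, d_2)$ in the case $r_1 = 1$.
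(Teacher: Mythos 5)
Your proof is correct and follows essentially the same route as the paper's: reduce to the Sylow $2$-part via $w_2(q_L)=w_2(q_E)$, decompose $q_E\simeq q_{E_1}\otimes q_{E_2}$, and expand each factor with (\ref{bes}) in the truncated group $H(G_K,{\bf Z}/2{\bf Z})^\times$. The only cosmetic difference is that you fully diagonalise $q_{E_1}$ and pair the rank-one factors, whereas the paper first computes the rank-two case and then reduces the general case to rank-two blocks; the parity bookkeeping and the identity $(a)\cdot(a)=(-1)\cdot(a)$ yield the same dichotomy either way.
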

\begin{proof} Since $E$ is a $S$-Galois algebra and $S$ is non-cyclic we know that $w(q_E)=1+w_2(q_E)$.  Since $q_L$ is isometric to  $m\otimes q_E$,  then  $w(q_L)=w(q_E)^m=(1+w_2(q_E))^m$ and so, since $m$ is odd,  we conclude that 
$w_2(q_L)=w_2(q_E)$. From the isomorphism of algebras $E\simeq E_1\otimes_KE_2$ we deduce the isometry of forms $q_E\simeq q_{E_1}\otimes q_{E_2}$. If  $S$ has a direct factor of order $2$, then $q_{E_1}$ is of rank $2$ and $q_{E_2}$ is of rank $2^r, r\geq 2$. We choose a diagonalisation 
$<a_1, a_2>$ of $q_{E_1}$. Using (\ref {bes}), we obtain that 
\begin{equation}
 w(q_E)=\prod_{1\leq i\leq2}(1+d_2 +((a_i)\cdot d_2+w_2(q_{E_2})), 
 \end{equation}
  and therefore that $w(q_E)=1+(d_1d_2, d_2)$. We now suppose that $S_1$ is of order $2^s$ with $s\geq 2$. Then,  for $1\leq i\leq  2^{s-1}$, we  can choose elements $a_i$ and $b_i$ in $K^\times$ such that $$q_{E_1}=\bigoplus_{1\leq i\leq 2^{s-1}}<a_i, b_i>.$$ Therefore one has: 
  \begin{equation}\label{ciccic}w(q_E)=\prod_{1\leq i\leq2^{s-1}}w(<a_i, b_i>\otimes q_{E_2})=\prod_{1\leq i\leq 2^{s-1}}(1+(d_1(i)d_2, d_2))\end{equation}
  with $d_1(i)=a_ib_i$. It follows from (\ref {ciccic}) that 
  $$w(q_E)=1 +(2^{s-1}(d_2, d_2)+\sum_{1\leq i\leq 2^{s-1}} (d_1(i), d_2))=1+(d_1, d_2)$$
  as required.
\end{proof}

\section {Global fields }
In this section $K$ is either a global  field of characteristic different from $2$ or a number field. 

\subsection { Proof of Corollaries \ref {ff}, \ref{numb1}, \ref{im} and \ref {numb2}}  We first observe that Corollary \ref{im} is an immediate consequence of Corollary \ref{numb1}.  We  let  $G$ be a  group of order $n$; we denote by $S$ a Sylow $2$-subgroup of $G$.  We consider a $G$-Galois algebra $L/K$ of degree $n$.  For a place $v$ of $K$ and a quadratic form $r$ over $K$ we let $r_v$ be the extended form $K_v\otimes_Kr$.  For any   place $v$ of $K$ we know that   $w_i(q_{L, v})$ is the image of $w_i(q_L)$ by the restriction map induced by  the injection $G_{K_v}\rightarrow G_K$. 

We first assume that the group $S$ is non-cyclic. Let  us  denote by $t$ the unit  form $X_1^2+...+X_n^2$ over $K$. For each    place $v$ of $K$    it follows from  Corollary \ref{triv} that 
 $$w_i(q_{L,v})=w_i(t_v)=0,\  i \in \{1, 2\} $$
 so   that    $q_{L, v}$ and $ t_v$ are isometric as forms over the local field $K_v$ for any non-archimedean place.  Since any place of a global function field is non-archimedean,  using   Hasse-Minkowski Theorem,  we conclude that  the trace form $q_L$ is isometric to $t$ and  Corollary \ref{ff} i) is proved. 
  
  Suppose now that $K$ is a number field. Let   $v$ be  an archimedean  place. If $v$ is  complex,   the forms $q_{L, v}$ and $ t_v$ are isometric over ${\bf C}$ because  they have the same rank. We now assume that $v$ is real.  If $\sigma(L_v)$ is trivial then $L_v/K_v$ is completely split and so $q_{L, v}\simeq t_v$. If  $\sigma(L_v)$  is non-trivial, then $L_v$ is isomorphic as a $K_v$-algebra to a product of 
 $n/2$ copies of ${\bf C}$.  The trace form of ${\bf  C}/{\bf  R}$ is isometric to $<1, -1>$ and thus   
 $q_{L,v}$ is isometric to $n/2$ copies of $<1, -1>$.  Since  $q_L$ is isometric to  $t$ if and only if $q_{L, v}\simeq t_v$ for any place $v$ of $K$, then we conclude that $L/K$ has a self-dual basis  if and only if $\sigma(L_v)=1$ for any real place. This proves Corollary \ref{numb1}. 
 
 For  $K={\bf Q}$ there exists a unique non-archimedean  place $v_\infty$. If  $L/{\bf  Q}$ is totally real then   $\sigma (L_{v_{\infty}})=1$ and  so  it follows from Corollary \ref{numb1}  that $q_L\simeq <1, \cdots, 1>$. Suppose now that $L/{\bf Q}$ is totally imaginary. We denote by $r$ the ${\bf  Q}$-quadratic form $(n/2)\otimes <1, -1>$.  Since 
$ n\equiv 0\  \mathrm{mod}\ 8$,  using  (\ref{rel}), we check  that $w_1(r)=w_2(r)=0$, and  therefore, using  Corollary \ref{triv}, we deduce that $w_i(q_L)=w_i(r)$ for 
$i \in \{1, 2\}$. Moreover since $\sigma (L_{v_{\infty}})\neq 1$, then $q_{L, v_\infty}$ is isometric to $(n/2)\otimes <1, -1>$ as ${\bf R}$-forms. We conclude that $q_L$ and $r$ having the same Hasse-Witt invariants in degree $1$ and $2$ and  having the same signature are isometric. Hence  Corollary \ref{numb2} (1) and (2) are proved.  

We now assume that the group $S$ is cyclic.  When $K$ is a global function field or is equal to ${\bf  Q}$, we let $s$ be the quadratic form $<2, 2d_{L/K}, 1, \cdots, 1>$. One easily checks  that $w_i(q_L)=w_i(s)$ for $iÊ\in \{1, 2\}$.  If   $K={\bf Q}$ and $L$ is totally real, then  the forms $q_L$ and $s$ have the same signature. We conclude that $q\simeq s$ when $K$ is either a function field or when $L/{\bf Q}$ is totally real. This completes the proof of Corollary \ref{ff} and proves Corollary \ref{numb2} iii). Setting $s'=(\frac{n}{2}-1)\otimes<1, -1>\oplus <(-1)^{(\frac{n}{2}-1)}2, 2d_L>$, we complete the proof of Corollary \ref {numb2} by hand checking the equalities of the signatures and the Hasse-Witt invariants in degree $1$ and $2$ of the forms $q_L$ and $s'$. \hfill $\square$
\subsection { Proof of Proposition \ref{cp}. } We use the notation of Section 2.1. By a local field we mean a field,  complete with respect to a fixed discrete valuation,  that has a perfect residue field  of  positive characteristic .
 \begin{lem}\label{floc} Let $K$ be  a local field with  residual characteristic different from $2$ and  let $G$ be a finite group with non-metacyclic Sylow $2$-subgroups. Then the trace form of any $G$-Galois algebra over $K$ is isometric to the unit form.
\end{lem}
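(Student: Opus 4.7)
The plan is to show both $w_1(q_L)=0$ and $w_2(q_L)=0$; since $K$ is a non-dyadic local field, quadratic forms over $K$ are classified by rank, discriminant class, and the second Hasse--Witt invariant, and this will identify $q_L$ with the unit form $\langle 1,\dots,1\rangle$. The vanishing of $w_1(q_L)$ is immediate: a non-metacyclic $2$-group is in particular non-cyclic, so Proposition \ref{ww_1} applies and yields $w_1(q_L)=0$, hence $d_{L/K}\in (K^\times)^2$.

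For $w_2(q_L)$, set $H=\mathrm{Im}(\Phi_L)$ and let $E=\chi(L)$ be the associated $H$-Galois field extension of $K$, so by (\ref{whit}) we have $q_L\simeq m\otimes q_E$ with $m=[G:H]$. The key structural input from the local hypothesis is the following: since the residue characteristic is odd, every finite $2$-extension of a local field with odd residue characteristic is tamely ramified, with cyclic inertia and cyclic residue Galois group, hence has metacyclic Galois group. Applying this to $E$ over the fixed field of a Sylow $2$-subgroup $S_H$ of $H$ shows that $S_H$ is metacyclic. Since $S$ is non-metacyclic and $S_H$ embeds into a conjugate of $S$, we must have $S_H\subsetneq S$; in particular $m$ is even.

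Now the Whitney formula (\ref{rel}) gives
$$w_2(q_L)=\binom{m}{2}w_1(q_E)^2+m\,w_2(q_E),$$
and the second term vanishes because $m$ is even. If $4\mid m$, then $\binom{m}{2}$ is also even and we are done. In the borderline case $m\equiv 2\pmod 4$ one has $[S:S_H]=2$; were $S_H$ cyclic, then $S$, being an extension of ${\bf Z}/2{\bf Z}$ by the normal cyclic subgroup $S_H$, would be metacyclic, contradicting the hypothesis. Hence $S_H$ is non-cyclic, and Lemma \ref{et} applied to $H$ forces $w_1(q_E)=0$, so again $w_2(q_L)=0$. The principal obstacle lies exactly in this borderline case: one must combine the metacyclicity of $S_H$ (inherited from the tame structure of the local Galois group) with the non-metacyclicity of $S$ to conclude that $S_H$ is both proper in $S$ and non-cyclic, killing the otherwise surviving term $\binom{m}{2}w_1(q_E)^2$.
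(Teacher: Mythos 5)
Your proof is correct and follows essentially the same route as the paper's: tame ramification forces the Sylow $2$-subgroup of $\mathrm{Im}(\Phi_L)$ to be metacyclic, hence proper in a (non-metacyclic) Sylow $2$-subgroup of $G$, so $m$ is even, and the leftover term $\binom{m}{2}w_1(q_E)^2$ is killed by the same dichotomy the paper uses (the only difference being that you split on $m \bmod 4$ where the paper splits on whether the Sylow $2$-subgroup of $H$ is cyclic — these are equivalent via the observation that a cyclic normal subgroup of index $2$ would make the ambient Sylow metacyclic). The concluding step, classifying forms over a non-dyadic local field by rank, discriminant and the second Hasse--Witt invariant, also matches the paper.
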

\begin{proof} Let $L/K$ be a $G$-Galois algebra,  $\chi \in \mathrm{Hom}^{alg}(L, K^s)$  and  $\Phi_L: G_K\rightarrow G$ be the morphism attached to $L$.  We set 
$H=\mathrm{Im}(\Phi_L)$. Since $G$ is non-cyclic we know from Proposition \ref{ww_1} that $w_1(q_L)=0$. Moreover,  it follows from (\ref{rel}) that 
$w_2(q_L)=\binom{m}{2}w_1(q_E)\cdot w_1(q_E) +mw_2(q_E)$,  where  $E$ denotes the subfield $\chi(L)$ of $K^s$ and $m$ is the index of $H$ in $G$. Let $S$ be the Sylow $2$-subgroup of $H$. Since the residual characteristic of $K$ is different from $2$,  the extension $E/E^S$ is at most tamely ramified and so $S$ is metacyclic (see \cite{Serrecl}, Chapter IV). Let $S'$ be a Sylow $2$-subgroup of $G$ containing $S$ and let $2^r$ be the index of $S$ in $S'$. The integer $2^r$ divides $m$ and $r \geq 1$ since $S'$ is not metacyclic. If $S$ is not cyclic it follows from Proposition \ref{ww_1} that $w_1(q_E)=0$ and so that  $w_2(q_L)=0$ since $m$ is even by hypothesis.  If now $S$ is cyclic, since $S'$ is not metacyclic,  then necessarily  $r\geq 2$ and so $\binom{m}{2}$ is even and once again $w_2(q_L)=0$.  We conclude that,  if $n$ denotes the degree of $L/K$,  the form $q_L$ and the unit form of rank $n$ having the same Hasse-Witt invariants in degree $1$ and $2$ are isometric.
 \end{proof}
 Suppose  now that $L$ is a  $G$-Galois algebra over $K$ with non-metacyclic Sylow $2$-subgroups. If $K$ is a global function field of characteristic different from $2$, following  the proof of Corollary \ref{ff}, we deduce from Lemma \ref{floc} that $q_L$ and the unit form $t$ are locally isometric at every place $v$ of $K$ and so we conclude that they are globally isometric. Similarly, when  $K={\bf  Q}$, we deduce that $q_L$ and the unit form are locally isometric at every place $v \neq 2$.  Using  Hasse reciprocity law we conclude that the same is true at $v=2$ and therefore that $q_L$ and $t$ are isometric. 
 \hfill $\square$

 \section {Trace form of Galois covers of  a scheme}
 Our goal is to use the results of the previous sections on group extensions and group cohomology in a geometric set-up, namely  when   we replace the  base field $K$ by a  connected scheme $Y$  in which $2$ is invertible and  the Galois $G$-algebra $L/K$ by a Galois $G$-cover $X\rightarrow Y$. This can be done thanks to  the  generalisation of Serre's comparison formula for \'etale covers of schemes obtained  by Kahn, Esnault and Viehweg in \cite{EKV},  Theorem 2.3.
 
  We fix a connected scheme $Y$ in which $2$ is invertible. We recall that a symmetric bundle over $Y$ is given by $(V, q)$ where $V$ is a locally free $\mathcal{O}_Y$-module and 
 $$q: V\otimes _{\mathcal{O}_Y}V\rightarrow \mathcal{O}_Y$$
is a symmetric morphism of $\mathcal{O}_Y$-modules. Let $V^{\vee}$ be the dual of $V$. The form $q$ induces a  morphism 
$\varphi_q: V\rightarrow V^{\vee}$ of $\mathcal{O}_Y$-modules; we assume that $\varphi_q$ is an isomorphism. In this section we consider  symmetric bundles attached to finite \'etale covers of $Y$. More precisely if $\pi: X\rightarrow Y$ is a finite \'etale cover  we denote by $(V_X, q_X)$ the symmetric bundle where $V_X=\pi_*(\mathcal{O}_X)$ and 
$$q_X: V_X\otimes_{\mathcal{O}_Y}V_X\rightarrow \mathcal{O}_Y$$  is defined over any affine open subcheme $\mathrm{Spec}(A)\subseteq Y$ by 
$$(x, y)\rightarrow \mathrm{Tr}_{B/A}(xy), \forall\  x, y \in B$$ 
where $\mathrm{Spec}(B)=\pi^{-1}(\mathrm{Spec}(A))$. For any symmetric bundle $(V, q)$ and any integer $m\geq 1$ one can define the \emph{$m$-th  Hasse-Witt invariant} of $q$ as an element of the \'etale cohomology group $H_{et}^m(Y, {\bf Z}/2{\bf Z})$ (see \cite{EKV} Section 1 or  \cite{Cassou} Section 4.5); indeed when $Y=\mathrm{Spec}(K)$   and $X=\mathrm{Spec}(L)$,  where $L/K$ is a finite separable algebra, then $q_X$ is defined by  the trace form $q_L$ of $L/K$ and the Hasse-Witt invariants of $q_X$ coincide with the Hasse-Witt invariants of $q_L$  introduced in Section 2.2. 

Let $\pi_1(Y,\overline{y})$ be the fundamental group of $Y$ based at some geometric point $\overline{y}$. We consider a finite group $G$ and a finite \'etale Galois cover $\pi: X\rightarrow Y$ of group $G=\mathrm{Aut}_Y(X)$.
Hence the finite set $\mathrm{Hom}_Y(\overline{y}, X)$ is endowed on the one hand with a simply transitive action of $G$,  induced by the action of $G$  on $X$,  and on the other hand with a continuous action of $\pi_1(Y,\overline{y})$. Following the lines of Section 2.1,  the choice of a point $\chi\in \mathrm{Hom}_Y(\overline{y}, X)$ gives a surjective group homomorphism $\Phi_X: \pi_1 (Y,\overline{y})\rightarrow G$, which does not depend on $\chi$ up to conjugacy. By composition with $f: G\rightarrow S_n$, we obtain a group homomorphism $\pi_1(Y,\overline{y})\rightarrow S_n$. Let  $K^s$ be  a separable closure of the residue field of some point of $Y$.  We  obtain an orthogonal representation 
$$\rho_X: \pi_1(Y,\overline{y})\rightarrow G\rightarrow S_n\rightarrow  {\bf O}_n(K^s)$$
by composing $f\circ \Phi_X$  with the standard embedding $i: S_n\rightarrow {\bf O}_n(K^s)$. We can now associate cohomological invariants to the orthogonal representation $\rho_X$.  The first class $w_1(\rho_X)$ is the group homomorphism $\mathrm{det}\circ \rho \in H^1(\pi_1(Y,\overline{y}), {\bf Z}/2{\bf Z})$.  
The second class $w_2(\rho_X)$ is defined as the pull-back by $\rho_X$ of the group extension (\ref {pinm}), Section 4.2. It follows from the definition of $\rho_X$ that 
$w_2(\rho_X)= \Phi_X^*(c_G)$ where $c_G\in H^2(G, {\bf Z}/2{\bf Z})$ is defined by the group extension 
 $$1\rightarrow {\bf Z}/2{\bf Z} \rightarrow {\bf Pin}(G) \rightarrow G\rightarrow  1$$
 introduced in (\ref{pin}), Section 4.2. Finally we define $w_i(\pi)\in H_{et}^i(Y, {\bf Z}/2{\bf Z})$, $i\in \{1, 2\}$,  as the image of $w_i(\rho_X)$ by the canonical group homomorphism $can: H^i(\pi_1(Y,\overline{y}), {\bf Z}/2{\bf Z})\rightarrow H_{et}^i(Y, {\bf Z}/2{\bf Z})$. We note that $can$ is an isomorphism for $i=1$ and an injective morphism for $i=2$.  Moreover $w_i(\pi)$ does not depend of the choice of the geometric point $\overline{y}$.

 For any unit $a\in \Gamma (Y, {\bf G}_m)$ we denote by $(a)\in H_{et}^1(Y, {\bf Z}/2{\bf Z})$ the  image of $a$ by the boundary map associated to the Kummer exact sequence of etales sheaves 
\[\xymatrix{0\ar[r]^{}&
{\bf Z}/2{\bf Z}\ar[r]^{}&{\bf G}_m\ar[r]^{2}& {\bf G}_m\ar[r]^{} & 0.\\
}\]
Theorem \ref{main} and Corollary \ref{triv} can be generalised as follows:
\begin{thm} Let $G$ be a $2$-reduced  group of order $n$, $n\equiv  0\ \mathrm{or}\ 2\ \mathrm{mod}\ 8$.  Then for any $G$-Galois cover  $\pi: X\rightarrow Y$ over $Y$  one has: 
$$w_2(q_X)=(2)\cdot w_1(\pi).$$
Moreover if the Sylow $2$-subgroups of $G$ are non-cyclic. Then 
$$w_1(q_X)=w_2(q_X)=0.$$

\end{thm}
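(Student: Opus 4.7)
My plan is to follow the proof of Theorem \ref{main} verbatim in the geometric setting, substituting the scheme $Y$ for the base field $K$ and invoking the generalization of Serre's comparison formula to \'etale covers due to Kahn, Esnault and Viehweg (\cite{EKV}, Theorem 2.3) in place of equation (\ref{S}). Applied to our Galois cover $\pi: X\to Y$, that formula should read
$$w_2(q_X) \,=\, can\bigl(\Phi_X^*(c_G)\bigr) \,+\, (2)\cdot w_1(q_X)$$
in $H^2_{et}(Y, {\bf Z}/2{\bf Z})$, while the analogous statement in degree one yields the identification $w_1(q_X)=w_1(\pi)=can(\det\circ\rho_X)$. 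These identifications are already packaged into the definitions given in Section 6 above, in particular into the construction $w_2(\rho_X)=\Phi_X^*(c_G)$ attached to the Pin extension (\ref{pin}).

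The crux of the argument is the vanishing of the class $c_G\in H^2(G, {\bf Z}/2{\bf Z})$. Since $n\equiv 0$ or $2 \ \mathrm{mod}\ 8$, Proposition \ref{lift2} shows that the extension (\ref{pin}) has the $2$-lift property; since $G$ is $2$-reduced, Theorem \ref{nil} gives $\mathrm{Ker}(s_G)=0$, so $c_G=0$ and the Pin extension is split. Consequently $\Phi_X^*(c_G)=0$ in $H^2(\pi_1(Y,\overline{y}), {\bf Z}/2{\bf Z})$, its image in $H^2_{et}(Y, {\bf Z}/2{\bf Z})$ vanishes, and the Kahn--Esnault--Viehweg formula collapses to $w_2(q_X)=(2)\cdot w_1(\pi)$, which is the first assertion. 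Note that $(2)\in H^1_{et}(Y, {\bf Z}/2{\bf Z})$ is well-defined because $2$ is invertible on $Y$ by hypothesis.

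For the second assertion, assume the Sylow $2$-subgroups of $G$ are non-cyclic. Then Lemma \ref{et} yields $f(G)\subset A_n$, so the homomorphism $\varepsilon_n\circ f\circ\Phi_X: \pi_1(Y,\overline{y})\to{\bf Z}/2{\bf Z}$ is trivial; this is precisely $w_1(\rho_X)$, and since $can$ is an isomorphism in degree one we obtain $w_1(\pi)=w_1(q_X)=0$. Substituting this into the first equation gives $w_2(q_X)=0$. The only non-routine ingredient of the whole argument is the invocation of the Kahn--Esnault--Viehweg generalization of Serre's formula, and specifically checking that the class it associates to $\rho_X$ in degree two coincides with the pull-back of the Pin extension (\ref{pin}) under $\Phi_X$; this compatibility, however, is built into the definition of $w_2(\rho_X)$ recalled in Section 6, so no additional work is required and the proof reduces entirely to the group-theoretic content already established.
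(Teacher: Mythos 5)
Your proposal is correct and follows essentially the same route as the paper's own proof: both invoke the Kahn--Esnault--Viehweg generalization of Serre's formula (\cite{EKV}, Theorem 2.3), deduce the vanishing of $c_G$ from Proposition \ref{lift2} together with the $2$-reducedness of $G$, and use Lemma \ref{et} to kill $w_1$ in the non-cyclic Sylow case. The only cosmetic difference is that you spell out the intermediate appeal to Theorem \ref{nil} and write the degree-two formula with $(2)\cdot w_1(q_X)$ in place of $(2)\cdot w_1(\pi)$, which is the same thing since $w_1(q_X)=w_1(\pi)$.
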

\begin{proof} We consider the orthogonal representation $\rho_X: \pi_1(Y,\overline{y})\rightarrow {\bf O}_n(K^s)$ attached to  $\pi: X\rightarrow Y$. Since the group $G$ is $2$-reduced it follows from Proposition \ref{lift2} that the class $c_G$ is trivial and so that $w_2(\rho_X)=\Phi_X^*(c_G)=0$. Moreover if the Sylow $2$-subgroups of $G$ are non-cyclic we know from Lemma \ref{et}  that $\mathrm{Im}(f)$ is contained in $A_n$ and therefore that $w_1(\rho_X)=0$.    We deduce from  \cite{EKV}  Theorem 2.3  the following equalities:  
\begin{equation}\label{K}
w_1(q_X)=w_1(\pi)\ \mathrm{and}\  \ w_2(q_X)=w_2(\pi)+(2)\cdot w_1(\pi). 
\end{equation}
Therefore the theorem follows immediately from  (\ref{K}) and the equalities $w_i(\pi)=can(w_i(\rho_X))$. 

\end{proof}

\end{document}